\documentclass[a4paper,leqno,10pt,twoside]{amsart}
\usepackage{amsthm}
\usepackage{amsmath}
\usepackage{amssymb}
\usepackage{amsfonts}
\usepackage{amscd}
\usepackage[english]{babel}
\usepackage{stmaryrd}
\usepackage{enumerate}

\newtheorem{thm}{Theorem}
\newtheorem{cor}{Corollary}
\newtheorem{prop}{Proposition}
\newtheorem{rem}{Remark}

\newtheorem*{defi}{Definition}
\newtheorem{lem}{Lemma}

\newtheorem*{thm2}{Theorem}

\begin{document}

    \title{Similar powers of a matrix}
    \author{Gerald BOURGEOIS}
    
    \address{G\'erald Bourgeois, \emph{GAATI, Universit\'e de la polyn\'esie fran\c caise, BP 6570, 98702 FAA'A, Tahiti, Polyn\'esie Fran\c caise.}}
    \email{bourgeois.gerald@gmail.com}
        
  \subjclass[2010]{Primary 15A30, Secondary 15A24}
    \keywords{Matrix equation, Similar matrices, Powers of matrices}

\begin{abstract}  Let $p,q$ be coprime integers such that $|p|+|q|>2$. We characterize the matrices $A\in\mathcal{M}_n(\mathbb{C})$ such that $A^p$ and $A^q$ are similar. 
If $A$ is invertible, we prove that $A$ is a polynomial in $A^p$ and $A^q$. To achieve this, we study the matrix equation $B^{-1}A^pB=A^q$. We show that for such matrices, $B^{-1}AB$ and $A$ commute. When $A$ is diagonalizable, $A$ is a root of $I_n$ and $B^{-1}AB$ is a power of $A$. We explicitly solve the previous equation when $A$ has $n$ distinct eigenvalues or when $A$ has a sole eigenvalue. In the second part, we completely solve the $2\times{2}$ case of the more general matrix equation $A^{r}B^{s}A^{r'}B^{s'}=\pm{I}_2$. 
\end{abstract}

\maketitle
    \section{Introduction}
\indent In \cite{4}, the matrices $A\in\mathcal{M}_n(\mathbb{C})$ such that $A^p=A$, with $p\geq{3}$, are characterized. In particular, it is shown that such an equality holds if and only if $A^{\#}=A^{p-2}$ where $A^{\#}$ is the group inverse of $A$.\\
In \cite{2}, the authors deal with a $\{K,s+1\}$-potent matrix, that is, a matrix $A$ such that $KA^{s+1}K=A$, where $s\geq{1}$ and $K$ is an involutory matrix. Clearly $A^{s+1}$ and $A$ are similar. They prove the following result
\begin{thm2}  \cite[Theorem 5]{2} Let $A\in\mathcal{M}_n(\mathbb{C})$, $\{\lambda_1,\cdots,\lambda_t\}$ be the spectrum of $A$ and, for every $j\leq{t}$, let $P_j$ be the eigenprojection associated to $\lambda_j$. The matrix $A$ is $\{K,s+1\}$-potent if and only if $A^{(s+1)^2}=A$ and, for every $i\leq{t}$, there exists a unique $j\leq{t}$ such that $\lambda_i=\lambda_j^{s+1}$ and $P_i=KP_jK$.
 \end{thm2}
 More related results can be found in \cite{8,6,7} or in \cite{9}. \\  
  \indent  In the first part of this paper, we are interested in a more general problem: let $p,q$ be coprime integers such that $|p|+|q|>2$. 
     We consider the $n\times{n}$ complex matrices $A$ such that $A^p$ and $A^q$ are similar. Clearly, if $A^p$ and $A^q$ are similar, then these two matrices have the same spectrum. The converse is false as we can see it in the following example
    $$A=\mathrm{diag}(\begin{pmatrix}i&0\\0&i\end{pmatrix},\begin{pmatrix}-i&1\\0&-i\end{pmatrix},J_3)  $$
 where $J_3$ is the Jordan nilpotent block of dimension $3$. The matrices $A^3$ and $A^5$ have the same spectrum but are not similar. However $A^3$ and $A^7$ are similar. 
 Assume that $0<p<q$ and that $A^p$ and $A^q$ are similar. We can easily show that there exists $k\leq{n}$ such that $A$ is similar to $\mathrm{diag}(N,T)$ where $N\in\mathcal{M}_k(\mathbb{C})$ satisfies $N^p=0_{k}$ and $T\in{G}L_{n-k}(\mathbb{C})$ is such that $T^p$ and $T^q$ are similar. In the particular case where $A^p=A^q$, $A^p$ is diagonalizable. This conclusion is not true in general, as we can check it in the previous example.  \\     
     In Theorem \ref{main}, we characterize the matrices $A$ such that $A^p$ and $A^q$ are similar. In this case, every eigenvalue of $A$ is shown to be $0$ or a root of unity. In the case when $A$ is invertible, $A$ is a polynomial in $A^p$ or in $A^q$. More explicitly, we consider the matrix equation 
     \begin{equation} \label{simil} B^{-1}A^pB=A^q  \end{equation}
      where the $n\times{n}$ complex invertible matrices $A,B$ are to be determined.\\
       We show that the matrices $B^{-1}AB$ and $A$ commute. Moreover if $A$ is diagonalizable, then $A$ is a root of $I_n$ and $B^{-1}AB$ is a power of $A$. In Theorem \ref{nvp}, we completely solve Eq. (\ref{simil}) when $A$ has $n$ distinct eigenvalues. Finally, the case where $A$ has a sole eigenvalue is considered too.\\
   \indent  In the second part of the article, we have a look at the $2\times{2}$ case of a generalization of Eq. (\ref{simil}). Let $\epsilon\in\{-1,1\}$ and let $r,r',s,s'$ be given non-zero integers such that $\mathrm{gcd}(r,r')=1$ and $\mathrm{gcd}(s,s')=1$. We consider the matrix equation
\begin{equation}   \label{equation 1}  A^{r}B^{s}A^{r'}B^{s'}=\epsilon{I}_n    \end{equation}
       We reduce the problem to the case where $A$ and $B$ are in $SL_2(\mathbb{C})$ and are not simultaneously triangularizable. In Theorem \ref{casgen}, the solutions of Eq. (\ref{equation 1}) are given. Moreover, if $r\not={r'}$ and $s\not={s'}$, then $A$, $B$ and $A^rB^s$ are roots of $I_2$. The methods employed to solve Eq. (\ref{equation 1}) are essentially different from those employed to solve Eq. (\ref{simil}). \\\\
    \indent We introduce notations that will be used in the sequel of the article.\\
    \textbf{Notation}.
    $i)$ Denote by $\mathbb{N}$ the set of positive integers.\\ 
      $ii)$ If $A$ is a square complex matrix, then $\sigma(A)$ denotes the set of distinct eigenvalues of $A$ and $m\sigma(A)$ denotes the set of eigenvalues of $A$ with multiplicities.\\
    $iii)$ A multiset $U$ is a set of complex numbers with multiplicities. Moreover, if $p\in\mathbb{Z}$, $U^p$ denotes the multiset of the $p$-th powers of the elements from $U$ with multiplicities.\\
    
  \section{Similarity of $A^p$ and $A^q$}
\addtocounter{equation}{-2}
 We consider the matrix equation
   \begin{equation}   B^{-1}A^pB=A^q \end{equation}
\addtocounter{equation}{1}
 when $A,B\in{G}L_n(\mathbb{C})$ and $p,q$ are given coprime integers. To avoid trivial situations, we assume that $|p|+|q|>2$. In the sequel, we put $C=B^{-1}AB$.
     \begin{prop}   \label{root} 
 Let $U$ be some finite multiset of non-zero complex numbers. Assume that $U^p$ and $U^q$ are equal. The following assertions hold\\
 $i)\;$ Elements of $U$ are roots of unity. Moreover their orders are coprime to $pq$.\\
 $ii)\;$ If $\lambda,\mu\in{U}$ are such that $\lambda^p=\mu^p$ or $\lambda^q=\mu^q$, then $\lambda=\mu$.\\
 $iii)$ For every $\lambda_1\in{U}$ there exists a unique sequence $(\lambda_i)_{i\in\mathbb{Z}}$ of elements of $U$ with, for every $i\in\mathbb{Z}$, $\lambda_i^q=\lambda_{i+1}^p$.\\
$iv)$ The application $(k,\lambda_1)\in\mathbb{Z}\times{U}\rightarrow\lambda_{k+1}\in{U}$ is an action of $\mathbb{Z}$ on $U$. The order of each element of the orbit $O_{\lambda_1}$ of $\lambda_1$ divides $p^{r_1}-q^{r_1}$ where $r_1=\mathrm{card}(O_{\lambda_1})$.
 \end{prop}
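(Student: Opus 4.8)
The plan is to treat the four assertions in order; only (i) needs real work, the rest being formal. For (i), I would first pin down the moduli: since $U^p=U^q$ as multisets they have the same underlying set, hence the same largest and smallest absolute values, so writing $M=\max_{x\in U}|x|>0$, $m=\min_{x\in U}|x|>0$ and using that $t\mapsto t^p$ is monotone on $(0,\infty)$ turns $U^p=U^q$ into two scalar relations between $M$ and $m$. A short case distinction on the signs of $p$ and $q$ — in which $\gcd(p,q)=1$ together with $|p|+|q|>2$ rule out $|p|=|q|$ — forces $M=m=1$, so every element of $U$ lies on the unit circle. To upgrade this to ``root of unity'', pass to arguments: $U$ becomes a finite multiset $\Theta$ in $\mathbb{R}/\mathbb{Z}$ with $p\Theta=q\Theta$, contained in the finitely generated subgroup $H=\langle\Theta\rangle$. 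If $H$ were infinite, its torsion subgroup would be finite and $H/H_{\mathrm{tors}}\cong\mathbb{Z}^k$ with $k\ge 1$; projecting $\Theta$ onto this free quotient and comparing maximal Euclidean norms on the two sides of $p\,\pi(\Theta)=q\,\pi(\Theta)$ is impossible unless $\pi(\Theta)$ is zero (once more using $|p|\ne|q|$), so in fact $H=H_{\mathrm{tors}}$ is finite and every $\lambda\in U$ is a root of unity. Finally, for coprimality of the orders with $pq$: if a prime $\ell$ divided $p$, say, and also divided the lcm of the orders of the elements of $U$, then raising to the $p$-th power strictly decreases the $\ell$-adic valuation of the order of any element whose order is divisible by $\ell$, whereas raising to the $q$-th power leaves that valuation unchanged since $\ell\nmid q$; comparing $\max_y v_\ell(\mathrm{ord}(y))$ over $U^p$ with the same maximum over $U^q$ contradicts $U^p=U^q$.

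Assertion (ii) is then immediate: $\lambda^p=\mu^p$ gives $(\lambda\mu^{-1})^p=1$, so $\mathrm{ord}(\lambda\mu^{-1})\mid p$; but $\lambda,\mu$, hence $\lambda\mu^{-1}$, have order prime to $p$ by (i), so $\lambda=\mu$, and similarly in the $q$ case. For (iii), given $\lambda_i\in U$ the element $\lambda_i^q$ lies in the multiset $U^q=U^p$, so it equals $x^p$ for a unique $x\in U$ (uniqueness by (ii)); put $\lambda_{i+1}:=x$, and recover $\lambda_i$ from $\lambda_{i+1}$ in the same way using $U^p=U^q$. Iterating these forced steps in both directions from $\lambda_1$ produces the two-sided sequence, which is unique because each step is. For (iv), let $f\colon U\to U$ send $\lambda$ to the unique element with $f(\lambda)^p=\lambda^q$; by (iii) it is a bijection of $U$, so $(k,\lambda)\mapsto f^k(\lambda)$ is a $\mathbb{Z}$-action, and it coincides with $(k,\lambda_1)\mapsto\lambda_{k+1}$ because $\lambda_{i+1}=f(\lambda_i)$. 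A one-line induction on $k\ge 0$ from $f(\mu)^p=\mu^q$ gives $\mu^{q^k}=f^k(\mu)^{p^k}$ for all $\mu\in U$; applying it with $\mu$ in the orbit $O_{\lambda_1}$ and $k=r_1=\mathrm{card}(O_{\lambda_1})$, for which $f^{r_1}$ fixes $\mu$, yields $\mu^{p^{r_1}-q^{r_1}}=1$, i.e.\ $\mathrm{ord}(\mu)\mid p^{r_1}-q^{r_1}$.

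The main obstacle is assertion (i): (ii) is a divisibility triviality, (iii) a forced construction, and (iv) pure bookkeeping with $f$. Within (i), the modulus reduction and the $\ell$-adic step are both short; the delicate point is excluding an infinite group of arguments, which relies on the (standard but not one-line) structure of finitely generated subgroups of $\mathbb{R}/\mathbb{Z}$ and is precisely where $|p|+|q|>2$ is used, through $|p|\ne|q|$. Note finally that (ii) — and hence (iii), (iv) — seems to genuinely need the full root-of-unity conclusion of (i), not merely that $|x|=1$.
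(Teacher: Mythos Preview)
Your argument is correct, and parts (ii)--(iv) are essentially the paper's own. The real difference is in (i). The paper never reduces to the unit circle or invokes the structure of finitely generated abelian groups; instead it goes straight to the sequence you only introduce in (iii): starting from $\lambda_1$, choose $(\lambda_i)_{i\in\mathbb{Z}}$ in $U$ with $\lambda_i^{\,q}=\lambda_{i+1}^{\,p}$, use finiteness of $U$ to find a repetition $\lambda_u=\lambda_v$ with $u<v$, and telescope to obtain $\lambda_1^{\,q^{u-1}(p^{v-u}-q^{v-u})}=1$. This single identity yields both ``root of unity'' and ``order coprime to $p$'' at once (the displayed exponent is prime to $p$ since $\gcd(p,q)=1$), and a symmetric pass gives coprimality to $q$.

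So the paper's route is more elementary---no modulus case split, no structure theorem, no $\ell$-adic bookkeeping---and it recycles the very sequence that drives (iii) and (iv), which makes the proposition feel like one idea rather than three. Your route, by contrast, cleanly separates the three obstructions (modulus, irrational angle, bad prime) and handles each with a tailored invariant; this is more conceptual and each step is individually trivial, at the cost of importing the structure of finitely generated subgroups of $\mathbb{R}/\mathbb{Z}$. Both arguments use the hypothesis $|p|+|q|>2$ only through $|p|\ne|q|$ (you) or $p^{v-u}\ne q^{v-u}$ (the paper), which amount to the same thing.
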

 \begin{proof}
$i)\;$ Let $\lambda_1\in U$. There exist $\lambda_2\in{U}$ such that $\lambda_1^q=\lambda_2^p$ and $\lambda_0\in{U}$ such that $\lambda_0^q=\lambda_1^p$. Inductively we can build a sequence $(\lambda_i)_{i\in\mathbb{Z}}$ in  $U$ with $\lambda_i^q=\lambda_{i+1}^p$ for all $i\in\mathbb{Z}$. Since $U$ is finite, there exist positive integers $u,v$ ($u<v$) such that $\lambda_u=\lambda_v$ and such that, for $u_1<v_1<v$, one has $\lambda_{u_1}\not=\lambda_{v_1}$. Since $\lambda_1^{q^{u-1}}=\lambda_u^{p^{u-1}}$ and $\lambda_1^{q^{v-1}}=\lambda_v^{p^{v-1}}$, we obtain that
$$
\lambda_1^{q^{u-1} p^{v-u}}=\lambda_u^{p^{v-1}}=\lambda_1^{q^{v-1}}.
$$
Hence, $\lambda_1$ is a root of unity of order $k$ dividing $q^{u-1}(p^{v-u}-q^{v-u})$. Thus $k$ and $p$ are coprime. In the same way, $k$ and $q$ are coprime. Finally $k$ divides $p^{v-u}-q^{v-u}$.\\
$ii)\;$ Now if $\lambda\not=\mu$ and $\lambda^q=\mu^q$, we have $(\lambda/\mu)^q=1$. Therefore the order of $\lambda/\mu$ is a divisor of $q$ which is $>1$. On the other hand, the order of $\lambda/\mu$ divides the least common multiple of the orders of $\lambda$ and $\mu$, and this number is coprime to $pq$. Hence, we obtain a contradiction. Therefore $\lambda=\mu$.\\
$iii)$ By $i)$ and $ii)$, the sequence $(\lambda_i)_{i\in\mathbb{Z}}$ is well defined.\\
$iv)$ Since $\lambda_1^{p^{v-u}}=\lambda_1^{q^{v-u}}=\lambda_{1+v-u}^{p^{v-u}}$, we have $\lambda_{1+v-u}=\lambda_1$ and $u=1$. We deduce that  $\{\lambda_1,\cdots,\lambda_{v-1}\}$ is the orbit of $\lambda_1$.  
  \end{proof}
  \begin{rem}  \label{orb}
  $i)$ When $U$ contains $r$ copies of the element $\lambda_1$, we associate $r$ orbits equal to $O_{\lambda_1}$. The elements of $U$ that are in the same orbit have the same order of multiplicity.\\
  $ii)$  Let $V$ be the set of distinct elements of $U$. The action of $\mathbb{Z}$ induces a permutation $\pi$ of $V$. If $\delta$ is the $\mathrm{lcm}$ of the cardinalities of the orbits, then the order of each element of $U$ divides $p^{\delta}-q^{\delta}$ and $\delta$ is the order of $\pi$.    
  \end{rem}
   \begin{lem}
  Let $N,M\in\mathcal{M}_n(\mathbb{C})$ be nilpotent matrices and $\lambda,\mu\in\mathbb{C}^*$. Then $(\lambda{I}_n+N)^p$ and $(\mu{I}_n+M)^q$ are similar if and only if $\lambda^p=\mu^q$ and the matrices $N,M$ are similar.  
  \end{lem}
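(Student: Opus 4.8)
The plan is to factor out the scalar part and reduce everything to a statement about nilpotent matrices. First I would record that for $\lambda\in\mathbb{C}^*$ and a nilpotent $N\in\mathcal{M}_n(\mathbb{C})$, since $N^n=0$ the binomial expansion terminates, so for every nonzero integer $p$ (for $p<0$ one first inverts $\lambda I_n+N$, which is legitimate since it is invertible)
\[
(\lambda I_n+N)^p=\lambda^p I_n+f(N),\qquad f(X)=p\lambda^{p-1}X+\binom{p}{2}\lambda^{p-2}X^2+\cdots\in\mathbb{C}[X],
\]
so that $f(0)=0$ and $f'(0)=p\lambda^{p-1}\neq0$; similarly $(\mu I_n+M)^q=\mu^q I_n+g(M)$ with $g(0)=0$ and $g'(0)=q\mu^{q-1}\neq0$. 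Note that $p,q\neq0$ is forced by the standing coprimality and $|p|+|q|>2$ assumptions, which is all that is used here.

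The key lemma I would isolate is the following: if $N\in\mathcal{M}_n(\mathbb{C})$ is nilpotent and $f\in\mathbb{C}[X]$ satisfies $f(0)=0$ and $f'(0)\neq0$, then $f(N)$ is similar to $N$. To see this, write $f(X)=X\,h(X)$ with $h(0)=f'(0)\neq0$; then $h(N)=f'(0)I_n+(\text{nilpotent})$ is invertible and commutes with $N$, so $f(N)^k=N^k h(N)^k$ has the same rank as $N^k$ for every $k\geq0$, whence $N$ and $f(N)$ have the same sequence of ranks of powers and therefore the same Jordan structure. In particular (taking $f(X)=cX$) one has $cN\sim N$ for every $c\in\mathbb{C}^*$. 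Applying the lemma to the polynomials obtained in the first step yields $f(N)\sim N$ and $g(M)\sim M$.

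It then remains to assemble the two implications. For the ``if'' direction, assuming $\lambda^p=\mu^q$ and $N\sim M$, one gets $f(N)\sim N\sim M\sim g(M)$, so $f(N)=S\,g(M)\,S^{-1}$ for some invertible $S$; adding the common scalar matrix $\lambda^p I_n=\mu^q I_n$ to both sides gives $(\lambda I_n+N)^p=S\,(\mu I_n+M)^q\,S^{-1}$. For the ``only if'' direction, similar matrices have the same spectrum, while $\lambda^p I_n+f(N)$ has sole eigenvalue $\lambda^p$ and $\mu^q I_n+g(M)$ has sole eigenvalue $\mu^q$, forcing $\lambda^p=\mu^q$; subtracting this scalar matrix preserves similarity, so $f(N)\sim g(M)$, and therefore $N\sim f(N)\sim g(M)\sim M$. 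The only genuinely non-formal ingredient is the similarity $f(N)\sim N$ of the key lemma; the remaining steps are bookkeeping, the one subtlety being to justify the terminating expansion in the first step when $p$ or $q$ is negative.
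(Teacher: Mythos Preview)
Your proof is correct and follows essentially the same route as the paper's: both expand $(\lambda I_n+N)^p=\lambda^p I_n+N\cdot(\text{invertible matrix commuting with }N)$ and then compare the kernel/rank sequences of powers to conclude that the nilpotent parts are similar. Your write-up is in fact more complete than the paper's, which sketches only one implication; your explicit isolation of the lemma ``$f(0)=0,\ f'(0)\neq0\Rightarrow f(N)\sim N$'' and your handling of both directions are welcome additions.
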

    \begin{proof}
 An easy calculation gives 
$$(\lambda{I}_n+N)^p=\lambda^pI_n+p\lambda^{p-1}NZ$$
 where $Z$ is an invertible matrix such that $ZN=NZ$. Thus for every $k\in\mathbb{N}$, $$\ker(((\lambda{I}_n+N)^p-\lambda^pI_n)^k)=\ker(N^k).$$
  If $M,N$ are similar, then, for every $k\in\mathbb{N}$, $\dim(\ker(M^k))=\dim(\ker(N^k))$ and the characteristic spaces of $(\lambda{I}_n+N)^p$ and $(\mu{I}_n+M)^q$ have the same dimension.  
  \end{proof}
    \textbf{Notation.} Let $A\in{G}L_n(\mathbb{C})$.\\
  $i)$ Its decomposition in Jordan normal form can be written $$A=P\;\mathrm{diag}(\lambda_1{I}_{i_1}+N_1,\cdots,\lambda_m{I}_{i_m}+N_m)P^{-1}$$
   where $P$ is an invertible matrix,  $\sigma(A)=(\lambda_k)_{k\leq{m}}$ (the $(\lambda_k)_{k\leq{m}}$ are pairwise distinct and $i_k$ denotes the multiplicity of $\lambda_k$) and $(N_k)_{k\leq{m}}$ are Jordan nilpotent matrices.\\
  $ii)$ If moreover $(m\sigma(A))^p=(m\sigma(A))^q$, by Proposition \ref{root}. $iv)$, for $U=m\sigma(A)$, we can associate the set of distinct orbits $(O_j)_{j\leq{\tau}}$ and their multiplicities $(r_j)_{j\leq{\tau}}$.\\\\
  From the previous results, we deduce easily 
  \begin{thm}   \label{main}
  Let $A\in\mathcal{M}_n(\mathbb{C})$ such that $(m\sigma(A))^p=(m\sigma(A))^q$.\\
  $i)$ When $A$ is invertible, the matrices $A^p$ and $A^q$ are similar if and only if for every $j\leq{\tau}$, $\lambda_k,\lambda_l\in{O}_j$, one has $i_k=i_l=r_j$ and $N_k=N_l$ up to orderings of the Jordan nilpotent blocks contained in $N_k$ and $N_l$.\\
  $ii)$ Suppose that $A$ is not invertible and $1\leq{p}<q$. The matrices $A^p$ and $A^q$ are similar if and only if 
  $$A=P\;\mathrm{diag}(A',N)P^{-1}$$
   where $P$ is an arbitrary invertible matrix, $A'\in{G}L_{n-k}(\mathbb{C})$ satisfies the properties given in i) and $N\in\mathcal{M}_k(\mathbb{C})$ satisfies $N^p=0_k$.   
  \end{thm}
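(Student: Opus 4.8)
The plan is to read everything off Proposition~\ref{root} and the preceding Lemma; as the phrase ``we deduce easily'' indicates, once those are in hand the argument is essentially bookkeeping. For $i)$ I would start from the Jordan form $A=P\,\mathrm{diag}(\lambda_1 I_{i_1}+N_1,\dots,\lambda_m I_{i_m}+N_m)P^{-1}$ and first note that $m\sigma(A^p)=(m\sigma(A))^p=(m\sigma(A))^q=m\sigma(A^q)$, so $A^p$ and $A^q$ automatically share their spectrum with multiplicities and only the Jordan partitions at each eigenvalue need comparing. By the Lemma, $(\lambda_k I_{i_k}+N_k)^p$ is similar to $\lambda_k^p I_{i_k}$ plus a nilpotent matrix with the partition of $N_k$, and $(\lambda_k I_{i_k}+N_k)^q$ to $\lambda_k^q I_{i_k}$ plus a nilpotent matrix with the same partition; by Proposition~\ref{root}~$ii)$ the maps $\lambda\mapsto\lambda^p$ and $\lambda\mapsto\lambda^q$ are injective on $\sigma(A)$, so distinct Jordan blocks of $A$ yield Jordan blocks of $A^p$ (resp. $A^q$) over distinct eigenvalues. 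Hence $A^p\sim A^q$ if and only if for every $\nu\in\sigma(A^p)=\sigma(A^q)$, writing $\nu=\lambda_k^p=\lambda_l^q$, one has $i_k=i_l$ and $N_k,N_l$ have the same Jordan partition.

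The next step is to translate this into orbit language. The identity $\lambda_l^q=\lambda_k^p$ together with the uniqueness in Proposition~\ref{root}~$iii)$ says precisely that $\lambda_k$ is the successor of $\lambda_l$ for the $\mathbb{Z}$-action of Proposition~\ref{root}~$iv)$, so $\lambda_k,\lambda_l$ lie in one orbit $O_j$; and as $\nu$ ranges over the common spectrum the pairs $(\lambda_l,\lambda_k)$ run through all pairs of consecutive elements of the orbits, which are finite, hence cyclic. Chaining the equalities of multiplicities and of Jordan partitions around each orbit then turns the previous condition into ``$i_k$ is constant and the $N_k$ pairwise similar on every $O_j$'', the constant being the multiplicity $r_j$ of $O_j$ in the sense of Remark~\ref{orb}. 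This gives both directions of $i)$.

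For $ii)$, with $A$ singular and $1\le p<q$, I would invoke the reduction recalled in the introduction: $A\sim\mathrm{diag}(A_0,A_1)$ with $A_0$ nilpotent of some size $k$ and $A_1$ invertible, and deleting the zeros from $m\sigma(A)$ leaves $(m\sigma(A_1))^p=(m\sigma(A_1))^q$. Since $A_0^p,A_0^q$ are nilpotent while $A_1^p,A_1^q$ are invertible, $A^p\sim A^q$ holds iff $A_0^p\sim A_0^q$ and $A_1^p\sim A_1^q$, and the invertible half is exactly $i)$ applied to $A_1$. For the nilpotent half, if $\mu_1\ge\mu_2\ge\cdots$ are the Jordan block sizes of $A_0$ then $\mathrm{rank}(A_0^{pj})=\sum_i\max(\mu_i-pj,0)$ and similarly with $q$ in place of $p$; as $p<q$ each summand on the $q$-side is at most the one on the $p$-side, so equality of the two sums at $j=1$ forces $\mu_i\le p$ for every $i$, i.e. $A_0^p=0_k$. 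Conversely, if $A=P\,\mathrm{diag}(A',N)P^{-1}$ with $A'$ satisfying $i)$ and $N^p=0_k$, then $N^q=0_k$ too, so $A^p=P\,\mathrm{diag}((A')^p,0_k)P^{-1}$ and $A^q=P\,\mathrm{diag}((A')^q,0_k)P^{-1}$ are similar because $(A')^p\sim(A')^q$ by $i)$.

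The only point needing care is the orbit bookkeeping in the second paragraph: one must check that each common eigenvalue $\nu$ of $A^p$ and $A^q$ arises from exactly one eigenvalue of $A$ on the $p$-side and one on the $q$-side (this is Proposition~\ref{root}~$ii)$), and that running $\nu$ over $\sigma(A^p)$ is the same as running over all consecutive pairs of all orbits; the rest is routine, which is why the deduction is called easy.
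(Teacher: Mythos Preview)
Your proposal is correct and is essentially the paper's intended argument spelled out in full: the paper gives no proof at all beyond the line ``from the previous results, we deduce easily,'' and your bookkeeping via Proposition~\ref{root} (injectivity of $\lambda\mapsto\lambda^p,\lambda^q$ on $\sigma(A)$ and the orbit structure) together with the Lemma on $(\lambda I+N)^p$ is exactly what that line refers to. Your rank argument for the nilpotent block in~$ii)$ is a clean way to make precise the ``we can easily show'' claim from the introduction; nothing more is needed.
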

\begin{rem} \label{card} Let $A$ be such that $A^p$ and $A^q$ are similar. For every $\lambda\in\sigma{(}A)\setminus\{0\}$, there exist $t\in\llbracket{1},\mathrm{card}(\sigma(A))\rrbracket$ and $k$, a divisor of $p^t-q^t$, such that $\lambda$ is a root of unity of order $k$.
\end{rem}
     To solve Eq. (\ref{simil}), it remains to determine the matrix $B$. This can be achieved by solving the Sylvester homogeneous equation $A^pX-XA^q=0$ and extracting the invertible solutions. However a more conceptual solution can be given.   
   \begin{prop} \label{dur}
  Let $A,B\in{G}L_n(\mathbb{C})$ satisfy Eq. (\ref{simil}). Then $A$ is a polynomial in $A^q$ or in $A^p$. In particular, $B^{-1}AB$ and $A$ commute.  
  \end{prop}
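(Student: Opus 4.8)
The plan is to prove the stronger fact that $A$ is a polynomial in $A^p$ (and, by symmetry, in $A^q$), from which the commutation statement is immediate. The heart of the matter will be to show that the commutative subalgebra $\mathbb{C}[A^p]$ of $\mathbb{C}[A]$ is in fact all of $\mathbb{C}[A]$.

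First I would note that Eq.~(\ref{simil}) forces $A^p$ and $A^q$ to be similar, so $(m\sigma(A))^p=(m\sigma(A))^q$; since $A$ is invertible, $U:=m\sigma(A)$ is a finite multiset of nonzero complex numbers and Proposition~\ref{root} applies. I only need its part ii): on the set $\sigma(A)=\{\lambda_1,\dots,\lambda_m\}$ of distinct eigenvalues, the maps $\lambda\mapsto\lambda^p$ and $\lambda\mapsto\lambda^q$ are injective. This is precisely what keeps $A^p$ (and $A^q$) from collapsing distinct eigenvalues of $A$ together.

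Next, I would run a dimension count. Writing $\mu_A=\prod_{k}(x-\lambda_k)^{e_k}$ for the minimal polynomial of $A$, one has $\dim_{\mathbb C}\mathbb{C}[A]=\deg\mu_A=\sum_k e_k$. Let $E_k=\ker\big((A-\lambda_k I_n)^n\big)$ be the generalized eigenspace of $A$ for $\lambda_k$, so that $\mathbb{C}^n=\bigoplus_k E_k$, each $E_k$ is $A^p$-invariant, and $A|_{E_k}=\lambda_k I+N_k$ with $N_k$ nilpotent of index $e_k$. By the computation in the Lemma preceding Theorem~\ref{main}, $\big(A|_{E_k}\big)^p-\lambda_k^p I=p\lambda_k^{p-1}N_kZ_k$ with $Z_k$ invertible and commuting with $N_k$; hence $(A^p-\lambda_k^p I_n)|_{E_k}$ is nilpotent of index exactly $e_k$. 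Since the $\lambda_k^p$ are pairwise distinct (Step 1) and the generalized eigenspaces of $A^p$ sum to $\mathbb{C}^n$, $E_k$ is exactly the generalized eigenspace of $A^p$ for $\lambda_k^p$, so $\mu_{A^p}=\prod_k(x-\lambda_k^p)^{e_k}$, of degree $\sum_k e_k$. Thus $\dim\mathbb{C}[A^p]=\dim\mathbb{C}[A]$, and since $\mathbb{C}[A^p]\subseteq\mathbb{C}[A]$ we get $\mathbb{C}[A^p]=\mathbb{C}[A]$; in particular $A=f(A^p)$ for some polynomial $f$. Applying the same argument to $BA^qB^{-1}=A^p$ gives $A\in\mathbb{C}[A^q]$ as well.

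Finally, conjugating by $B$ and using $B^{-1}A^pB=A^q$ gives $B^{-1}AB=B^{-1}f(A^p)B=f(A^q)\in\mathbb{C}[A]$, so $B^{-1}AB$ commutes with $A$. The only substantive step is the dimension count: concretely, the fact that raising a Jordan block with nonzero eigenvalue to a power preserves its nilpotency index — which the Lemma above hands us — combined with the non-collapsing of eigenvalues coming from Proposition~\ref{root}~ii). I expect that to be the main (and essentially the only) obstacle; everything else is formal manipulation inside $\mathbb{C}[A]$.
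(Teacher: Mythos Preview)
Your argument is correct and complete. Both you and the paper hinge on Proposition~\ref{root}~ii) (injectivity of $\lambda\mapsto\lambda^p$ on $\sigma(A)$) together with the fact that a nonzero eigenvalue keeps the nilpotency index of its Jordan block under $p$-th powers. The difference is in how this is packaged: the paper invokes the external result \cite[Theorem~2]{1} on primary matrix functions, observing that $f(x)=x^q$ is holomorphic, injective on $\sigma(A)$, and has nonvanishing derivative there, hence $A$ is a polynomial in $f(A)=A^q$. You instead reprove the needed special case by hand via the dimension count $\dim\mathbb{C}[A^p]=\deg\mu_{A^p}=\deg\mu_A=\dim\mathbb{C}[A]$, using the paper's own Lemma to control the nilpotency indices. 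Your route is more self-contained and makes transparent exactly which structural fact is doing the work; the paper's route is shorter but relies on an outside reference. For the commutation, the paper argues that $C=B^{-1}AB$ satisfies $C^p=A^q$, so $C$ commutes with $A^q$ and hence with $A\in\mathbb{C}[A^q]$; your version $B^{-1}AB=f(A^q)\in\mathbb{C}[A]$ is equally valid and arguably cleaner.
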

  \begin{proof}
  Let $f:x\in{V}\rightarrow{x}^q$ where $V$ is a neighborhood of $\sigma(A)$. According to Proposition \ref{root}. $ii)$, $f$ is a holomorphic function that is one to one on $\sigma(A)$ and $f'\not=0$ on $\sigma(A)$. By \cite[Theorem 2]{1}, $A$ is a polynomial in $A^q$. By symmetry, $A$ is a polynomial in $A^p$. Since $C^p=A^q$, $C$ and $A^q$ commute as $C$ and $A$ do.  
  \end{proof}
  In the next two results, we assume that $A$ is diagonalizable. Note that we will see, in Remark \ref{nondiag}, that $A$ can be non-diagonalizable.
  \begin{defi}
  We say that $A\in\mathcal{M}_n(\mathbb{C})$ is a root of $I_n$ if there exists $k\in\mathbb{N}$ such that $A^k=I_n$.  
  \end{defi} 
  \begin{prop}   \label{diag} Let $A,B\in{G}L_n(\mathbb{C})$ satisfy Eq (\ref{simil}). If $A$ is diagonalizable, then $A$ is a root of $I_n$ and $B^{-1}AB$ is a power of $A$.  
  \end{prop}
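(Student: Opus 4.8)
The plan is to exploit the two structural facts already in hand: by Proposition \ref{dur} the matrix $C=B^{-1}AB$ commutes with $A$, and by Proposition \ref{root}.$i)$ every eigenvalue of $A$ is a root of unity. Since $A$ is diagonalizable with eigenvalues that are all roots of unity, $A$ itself satisfies $A^k=I_n$ for $k$ the lcm of the orders of its eigenvalues (concretely $k\mid p^{\delta}-q^{\delta}$ by Remark \ref{orb}), which is the first assertion. For the second assertion I would diagonalize $A$ simultaneously with $C$: since $C$ commutes with $A$ and $A$ is diagonalizable, and since $C=B^{-1}AB$ is similar to $A$ hence also diagonalizable, the two are simultaneously diagonalizable. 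Passing to a common eigenbasis, write $A=\mathrm{diag}(\mu_1,\dots,\mu_n)$ and $C=\mathrm{diag}(\nu_1,\dots,\nu_n)$, where each $\mu_j,\nu_j$ is a root of unity and $\{\nu_1,\dots,\nu_n\}=\{\mu_1,\dots,\mu_n\}$ as multisets (similarity of $A$ and $C$).

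The key relation is $C^p=A^q$, i.e. $\nu_j^{\,p}=\mu_j^{\,q}$ for every $j$. Now I would invoke Proposition \ref{root} with $U=m\sigma(A)$: since $\nu_j\in U$ and $\mu_j\in U$, part $iii)$ says that $\mu_j$ sits in a two-sided orbit $(\lambda_i)_{i\in\mathbb Z}$ with $\lambda_i^{\,q}=\lambda_{i+1}^{\,p}$, and the relation $\nu_j^{\,p}=\mu_j^{\,q}=\lambda_0^{\,q}$ (taking $\lambda_0=\mu_j$ in the normalization) together with part $ii)$ (the map $\lambda\mapsto\lambda^p$ is injective on $U$) forces $\nu_j=\lambda_1$, the unique $p$-th-root-within-$U$ of $\mu_j^{\,q}$. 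In particular $\nu_j$ is determined by $\mu_j$ through a fixed rule that depends only on the orbit structure of $U$, not on $j$.

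The final step is to upgrade "$\nu_j$ is a canonical function of $\mu_j$'' to "$C=A^m$ for a single integer $m$''. Here I would use Proposition \ref{root}.$iv)$: on each orbit $O$ of cardinality $r$, the group $\mathbb Z$ acts and $p^r\equiv q^r$ modulo the common order; since $\gcd(p,q)=1$, $q$ is invertible modulo that order, so there is an integer $m_O$ with $p\cdot m_O\equiv q\pmod{\mathrm{ord}}$ on that orbit — indeed the shift by one step in the orbit is realized by raising to a suitable power. Then by the Chinese-remainder / simultaneous-congruence argument across the finitely many orbit-orders (all dividing $p^{\delta}-q^{\delta}$ with $\delta$ the lcm of orbit cardinalities, per Remark \ref{orb}), one finds a single $m\in\mathbb Z$ with $\mu_j^{\,m}=\nu_j$ for all $j$ simultaneously, whence $A^m=C=B^{-1}AB$.

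The main obstacle I anticipate is precisely this last coherence step: ensuring that the exponent $m$ can be chosen uniformly over all eigenvalues at once. It is not enough that each $\mu_j$ individually has some power equal to $\nu_j$; one must produce one $m$ that works everywhere. The resolution is that all eigenvalues live among the $k$-th roots of unity for the single modulus $k=p^{\delta}-q^{\delta}$ (Remark \ref{orb}.$ii)$), and on the group of $k$-th roots of unity the rule $\mu\mapsto\nu$ with $\nu^p=\mu^q$ is, after fixing the orbit-consistent branch, exactly the group endomorphism $\zeta\mapsto\zeta^{m}$ where $m$ solves $pm\equiv q\pmod k$ — such $m$ exists because $\gcd(p,k)=1$. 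That single $m$ then satisfies $A^m=B^{-1}AB$, completing the proof.
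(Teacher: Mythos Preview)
Your argument is correct, but it is considerably more elaborate than the paper's. The paper bypasses simultaneous diagonalization, the orbit analysis, and the Chinese-remainder step entirely: once one knows (from Proposition~\ref{root}.$i)$) that the eigenvalues of $A$ have orders coprime to $pq$, diagonalizability gives $A^r=I_n$ for some $r$ with $\gcd(r,p)=1$; then B\'ezout yields $\alpha p+\beta r=1$ and one simply computes
\[
C=C^{\alpha p+\beta r}=(C^{p})^{\alpha}(C^{r})^{\beta}=(A^{q})^{\alpha}\,I_n=A^{\alpha q},
\]
using only $C^{p}=A^{q}$ and the fact that $C$, being similar to $A$, also satisfies $C^{r}=I_n$. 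No appeal to Proposition~\ref{dur}, no eigenvalue-by-eigenvalue bookkeeping.

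Your route does work: after simultaneous diagonalization the relation $\nu_j^{\,p}=\mu_j^{\,q}$ together with $\gcd(p,k)=1$ (where $k=p^{\delta}-q^{\delta}$) forces $\nu_j=\mu_j^{\,m}$ for the single $m$ solving $pm\equiv q\pmod{k}$, hence $C=A^{m}$. The orbit discussion in the middle of your argument is not actually needed for this conclusion and could be dropped. What the paper's approach buys is brevity and a proof that stays at the level of matrix identities; what yours buys is a more explicit description of the exponent ($m\equiv p^{-1}q$ modulo the common order), which foreshadows the computations in Theorem~\ref{nvp}.
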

  \begin{proof}
  Using Proposition \ref{root} with $U=\sigma(A)$, we deduce that the eigenvalues of $A$ are roots of unity such that their orders are coprime to $pq$. Since $A$ is diagonalizable, there exists $r\in\mathbb{N}$ such that $\mathrm{gcd}(r,pq)=1$ and $A^r=I_n$. Therefore there exist $\alpha,\beta\in\mathbb{Z}$ such that $\alpha{p}+\beta{r}=1$. We deduce that $C=(C^p)^{\alpha}(C^r)^\beta=A^{\alpha{q}}$.
    \end{proof} 
    Now we give a complete solution of Eq (\ref{simil}) in the unknowns $A,B$ when $A$ is assumed to have $n$ distinct eigenvalues.
  \begin{rem}  \label{distinct}
    Suppose that $B^{-1}A^pB=A^q$ and that the eigenvalues $(\lambda_i)_i$ of $A$ are non-zero and pairwise distinct. We may assume that $A=\mathrm{diag}(\lambda_1,\cdots,\lambda_n)$. By reordering the $(\lambda_i)_i$, we may assume that the permutation $\pi$, considered in Remark \ref{orb}. $ii)$, is the product of disjoint cycles $(C_k)_k$ in the form $$C_k=(\lambda_{i_k},\lambda_{i_k+1},\cdots,\lambda_{i_{k+1}-1}).$$
     Since $B^{-1}AB$ is diagonal, $B$ permutes the eigenspaces of $A$ and $$B=\mathrm{diag}(b_1,\cdots,b_n)\Sigma$$
     where the entries $(b_i)_{i\leq{n}}$ are complex numbers and $\Sigma$ is a permutation matrix. The condition $B^{-1}A^pB=A^q$ is equivalent to $$\Sigma^{-1}\mathrm{diag}(\lambda_1^p,\cdots,\lambda_n^p)\Sigma=\mathrm{diag}(\lambda_1^q,\cdots,\lambda_n^q).$$
     By Proposition \ref{root}. $ii)$, the $(\lambda_i^p)_i$ and the $(\lambda_i^q)_i$ are pairwise distinct. Therefore, $\Sigma$ is the permutation matrix associated to $\pi$. Thus the solutions in $B$ are  trivial and we may suppose that $\pi$ is the cycle $(1,2,\cdots,n)$.
    \end{rem}
    \textbf{Notation.}  Denote by $R$ the ring $\mathbb{Z}/(q^n-p^n)$.
    \begin{thm} \label{nvp} Let $A=diag(\lambda_1,\cdots,\lambda_n)$, with the non-zero and pairwise distinct $(\lambda_i)'s$, be such that $A^p$ and $A^q$ are similar. There exists
   $$k_1\in{R}\setminus\bigcup_{z|n,z<n}(\dfrac{q^n-p^n}{q^z-p^z})R$$ 
such that, for every $1\leq{u}\leq{n}$, $\lambda_u=\exp(\dfrac{2i\pi{k}_{u}}{q^n-p^n})$ where $k_{u}=(p^{-1}q)^{u-1}k_1$.   
    \end{thm}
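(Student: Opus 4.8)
\emph{Strategy of proof.} The plan is to translate the multiplicative relations between the eigenvalues into additive congruences in the ring $R=\mathbb{Z}/(q^n-p^n)$, and then to read off the parametrization directly. By Remark \ref{distinct} we may assume $A=\mathrm{diag}(\lambda_1,\dots,\lambda_n)$ with the permutation $\pi$ equal to the $n$-cycle $(1,2,\dots,n)$; in particular $\lambda_u^{\,q}=\lambda_{u+1}^{\,p}$ for every $u$, indices being read modulo $n$. Since $\sigma(A)$ is then a single orbit of cardinality $n$, Proposition \ref{root} and Remark \ref{orb}. $ii)$ (with $\delta=n$) show that every $\lambda_u$ is a root of unity whose order divides $q^n-p^n$; hence we may write $\lambda_u=\exp(2i\pi k_u/(q^n-p^n))$ for a well-defined $k_u\in R$.

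Next I would observe that $p$ and $q$ are units of $R$: from $\mathrm{gcd}(p,q)=1$ one gets $\mathrm{gcd}(p,q^n-p^n)=\mathrm{gcd}(p,q^n)=1$, and likewise $\mathrm{gcd}(q,q^n-p^n)=1$. The relation $\lambda_u^{\,q}=\lambda_{u+1}^{\,p}$ becomes $qk_u=pk_{u+1}$ in $R$, that is $k_{u+1}=(p^{-1}q)k_u$; iterating yields $k_u=(p^{-1}q)^{u-1}k_1$ for all $u$, and the cyclic closure $k_{n+1}=k_1$ reduces to $(q^n-p^n)k_1\equiv 0$, which is automatic. This already establishes the displayed formula, so it only remains to identify which $k_1\in R$ can occur.

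For this I would use that, for a divisor $z$ of $n$, the integer $q^z-p^z$ divides $q^n-p^n$ (apply $a-b\mid a^m-b^m$ with $a=q^z$, $b=p^z$, $m=n/z$), so that $(q^n-p^n)/(q^z-p^z)$ is an integer and the condition $k_1\in\frac{q^n-p^n}{q^z-p^z}R$ is equivalent to $(q^z-p^z)k_1\equiv 0\pmod{q^n-p^n}$. If this held for some proper divisor $z<n$, then, multiplying by the unit $p^{-z}$, we would get $(p^{-1}q)^z k_1\equiv k_1$, hence $\lambda_{1+z}=\lambda_1$ with $2\le 1+z\le n$, contradicting the pairwise distinctness of the $\lambda_u$; therefore $k_1\notin\bigcup_{z\mid n,\,z<n}\frac{q^n-p^n}{q^z-p^z}R$, which is the asserted restriction. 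Conversely, for any such $k_1$ the numbers $\lambda_u=\exp(2i\pi(p^{-1}q)^{u-1}k_1/(q^n-p^n))$ are pairwise distinct and satisfy $\lambda_u^{\,q}=\lambda_{u+1}^{\,p}$, so they give a genuine solution; here one invokes the classical identity $\mathrm{gcd}(q^r-p^r,q^n-p^n)=q^{\mathrm{gcd}(r,n)}-p^{\mathrm{gcd}(r,n)}$ to see that a coincidence $\lambda_u=\lambda_v$ with $0<v-u<n$ would force $(q^d-p^d)k_1\equiv 0$ for the proper divisor $d=\mathrm{gcd}(v-u,n)$ of $n$. The only part requiring genuine care is this elementary-number-theory bookkeeping of divisors and the passage from "order divides $q^n-p^n$" to congruences in $R$; everything else is formal.
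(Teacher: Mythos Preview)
Your proposal is correct and follows essentially the same route as the paper's proof: reduce via Remark \ref{distinct} to the single $n$-cycle, use Remark \ref{orb} to place the eigenvalues among the $(q^n-p^n)$-th roots of unity, pass to the additive recursion $k_{u+1}=(p^{-1}q)k_u$ in $R$, and characterize distinctness through the identity $\gcd(q^{\alpha}-p^{\alpha},q^n-p^n)=q^{\gcd(\alpha,n)}-p^{\gcd(\alpha,n)}$. Your write-up is in fact slightly more explicit than the paper's (you spell out why $p,q$ are units in $R$ and separate the two implications for the distinctness condition), but there is no substantive difference in method.
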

    \begin{proof}
     We put $\lambda_{n+1}=\lambda_1$. According to Remark \ref{orb}. $ii)$, $$\sigma(A)\subset{V}_n=\{z\in\mathbb{C}\;|\;z^{q^n-p^n}=1\},$$
      that is, for all $u\leq{n}$, there exists $k_u\in{R}$ such that $$\lambda_u=\exp(\dfrac{2i\pi{k}_{u}}{q^n-p^n}).$$
       The condition to be fulfilled is: for all $u\leq{n}$, $\lambda_u^q=\lambda_{u+1}^p$ or $qk_u=pk_{u+1}$. Note that $p^{-1}$, the inverse of $p$ in $R$ exists since $\gcd(p,q)=1$. Thus $k_{u+1}=(p^{-1}q)k_u$ and $k_{u}=(p^{-1}q)^{u-1}k_1$. Hence $\sigma(A)$ is determined by the choice of $k_1$. Remark that $(p^{-1}q)^n=1$ and then $k_{n+1}=k_1$. \\ 
 \indent Moreover the $(k_u)_{u\leq{n}}$ must be distinct. We consider the following equation in $R$: $(p^{-1}q)^\alpha{k}=k$ where $\alpha\in\llbracket{1},n\llbracket$. Since $p,q$ are invertible in $R$, we obtain $k(p^{\alpha}-q^{\alpha})=0$. From 
 $$\gcd(q^n-p^n,p^{\alpha}-q^{\alpha})=q^z-p^z,$$
  where $z=\gcd(n,\alpha)$, we deduce that $p^{\alpha}-q^{\alpha}=(q^z-p^z)f$ where $f$ is invertible in $R$. Therefore
   $k\in(\dfrac{q^n-p^n}{q^z-p^z})R$. When $\alpha$ goes through $\{1,\cdots,n-1\}$, $z$ goes through the strict divisors of $n$. Finally the required condition on $k_1$ is\\
  $$\text{for every strict divisor }z\text{ of }n,\; k_1\notin(\dfrac{q^n-p^n}{q^z-p^z})R.$$ 
     \end{proof}     
        \begin{rem} \label{nondiag} There are invertible solutions of Eq. (\ref{simil}) such that $A$ is not diagonalizable and $C$ is not a power of $A$. Indeed, let $n=4,p=2,q=3$. If $A\in{G}L_4(\mathbb{C})$ is such that $A^2$ and $A^3$ are similar and if $\sigma{(}A)=\{\lambda,\mu\}$, then, by Remark \ref{card}, $\lambda^5=\mu^5=1$. A solution of Eq. (\ref{simil}) is $$A=\begin{pmatrix}\lambda&1&0&0\\0&\lambda&0&0\\0&0&\bar{\lambda}&1\\0&0&0&\bar{\lambda}\end{pmatrix},B=\begin{pmatrix}0&0&1&0\\0&0&0&\bar{\alpha}\\1&0&0&0\\0&\alpha&0&0\end{pmatrix}$$ 
 where $\lambda=e^{2i\pi/5},\alpha=\dfrac{3}{2}e^{6i\pi/5}$. Clearly, $C=\begin{pmatrix}\bar{\lambda}&\alpha&0&0\\0&\bar{\lambda}&0&0\\0&0&\lambda&\bar{\alpha}\\0&0&0&\lambda\end{pmatrix}$ is not a power of $A$.     
    \end{rem}
    We consider the case where $A$ has a sole eigenvalue.\\
    \textbf{Notation.} Let $S$ be a square matrix. We denote by $\mathcal{C}(S)$ the set of invertible matrices that commute with $S$, that is   $$\mathcal{C}(S)=\{\Delta\in{G}L_n(\mathbb{C})\;|\;\Delta{S}=S\Delta\}.$$
    \begin{rem} It should be noted that $\mathcal{C}(S)$ is not the commutant of $S$.
    \end{rem}
    \begin{prop} Let $A\in{G}L_n(\mathbb{C})$ such that $\sigma{(}A)=\{\lambda\}$ with $\lambda^{q-p}=1$ and let $N=A-\lambda{I}_n$. One has\\
    $i)$ $A^p$ and $A^q$ are similar.\\
    $ii)$ There exists a unique polynomial $P\in\mathbb{C}[X]$ with least degree, such that for every matrix $C$ defined above, $C-\lambda{I}_n=P(N)$.\\
    $iii)$ There exists a matrix $B_0$ such that $B_0^{-1}NB_0=P(N)$ and the set of matrices $B$ satisfying $B^{-1}A^pB=A^q$ is $\mathcal{C}(N)B_0$. 
    \end{prop}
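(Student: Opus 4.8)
The plan is the following. Part $i)$ is immediate from the Lemma above: since $\sigma(A)=\{\lambda\}$ we may write $A=\lambda I_n+N$ with $N$ nilpotent, and that Lemma (applied with $M=N$ and $\mu=\lambda$) shows that $A^p=(\lambda I_n+N)^p$ and $A^q=(\lambda I_n+N)^q$ are similar if and only if $\lambda^p=\lambda^q$ and $N$ is similar to $N$; the former holds because $\lambda^{q-p}=1$ and the latter is trivial.

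For part $ii)$, fix a solution $B$ of $B^{-1}A^pB=A^q$ (which exists by $i)$) and set $C=B^{-1}AB$, so that $C^p=A^q$ and, by Proposition \ref{dur}, $CA=AC$. Since $\sigma(C)=\sigma(A)=\{\lambda\}$, the matrix $D:=C-\lambda I_n$ is nilpotent, and $DN=ND$. The crucial step is to prove that $C$ is completely determined. Because $\lambda\neq 0$ and $\lambda^p=\lambda^q$, choose a holomorphic branch $g$ of the $p$-th root on a neighbourhood of $\lambda^q$ with $g(\lambda^q)=\lambda$; then $W:=g(A^q)$ is a polynomial in $A^q$, satisfies $W^p=A^q$, and $\sigma(W)=\{\lambda\}$. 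As $C$ commutes with $A^q=C^p$, it commutes with $W$, so $(CW^{-1})^p=I_n$ while $\sigma(CW^{-1})=\{1\}$; writing $CW^{-1}=I_n+M$ with $M$ nilpotent, expanding $(I_n+M)^p=I_n$ and multiplying by a top power of $M$ forces $pM^k=0$, hence $M=0$ and $C=g(A^q)$. In particular $C$ is independent of the choice of $B$. To conclude $ii)$, write $A^q=\lambda^q I_n+Q(N)$ with $Q(0)=0$ and $Q'(0)=q\lambda^{q-1}\neq 0$, substitute this into the Taylor expansion of $g$ at $\lambda^q$ to get $C-\lambda I_n=P_0(N)$ for some $P_0$ with $P_0(0)=0$, and take $P$ to be the remainder of $P_0$ modulo the minimal polynomial $X^d$ of $N$; this $P$ is the unique polynomial of least degree with $C-\lambda I_n=P(N)$ (it is non-constant when $N\neq 0$, its linear coefficient being $q/p$).

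For part $iii)$, let $B_0$ be any solution of $B^{-1}A^pB=A^q$ (such $B_0$ exists by $i)$). By $ii)$, $B_0^{-1}AB_0=C=\lambda I_n+P(N)$, and subtracting $\lambda I_n$ gives $B_0^{-1}NB_0=P(N)$. For the description of the solution set: if $\Delta\in\mathcal{C}(N)$ then $\Delta$ commutes with $A=\lambda I_n+N$ and hence with $A^p$, so $(\Delta B_0)^{-1}A^p(\Delta B_0)=B_0^{-1}A^pB_0=A^q$, giving $\mathcal{C}(N)B_0\subseteq\{B:B^{-1}A^pB=A^q\}$; conversely, if $B^{-1}A^pB=A^q$ then $B^{-1}AB=C=B_0^{-1}AB_0$ by the uniqueness established in $ii)$, so $B_0B^{-1}$ commutes with $A$, i.e.\ $B_0B^{-1}\in\mathcal{C}(A)=\mathcal{C}(N)$, whence $B\in\mathcal{C}(N)B_0$. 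The main obstacle of the whole argument is the rigidity claim in $ii)$ — that $C^p=A^q$ together with $CA=AC$ forces $C=g(A^q)$; once this is in hand, the Taylor-expansion identification of $P$ and the coset structure of the solution set are routine.
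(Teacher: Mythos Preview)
Your proof is correct, and for part $i)$ it coincides with the paper's one-line appeal to the preceding Lemma. For parts $ii)$ and $iii)$, however, you take a genuinely different route.

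For $ii)$, the paper expands $C^p=A^q$ into the relation $EM=FN$ with $E,F$ invertible and commuting with $M,N$, then uses Proposition~\ref{dur} to write $N$ as a polynomial in $M$ and invokes \cite[Theorem~2]{1} a second time to invert this and obtain $M=P(N)$; uniqueness of the coefficients then comes from the linear independence of $N,\dots,N^{d-1}$. You instead identify $C$ directly as the primary $p$-th root $g(A^q)$, via the rigidity fact that a unipotent matrix whose $p$-th power is $I_n$ must be $I_n$. Your argument is more self-contained (it does not need the external inversion theorem from \cite{1}, and indeed your key step does not even require the commutation $CA=AC$ from Proposition~\ref{dur}, since $W=g(A^q)$ is already a polynomial in $C^p$); the paper's approach, on the other hand, makes the recursive computation of the coefficients $\alpha_i$ from the expanded relation~(\ref{dec}) more transparent.

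For $iii)$, the paper constructs an explicit $B_0$ block by block from the Jordan form of $N$, building the change-of-basis matrix from the cyclic vectors $M^{n-1}e_n,\dots,e_n$. You bypass this construction entirely: you take $B_0$ to be any conjugating matrix (whose existence is guaranteed by $i)$) and use the uniqueness of $C$ established in $ii)$ to conclude that any other solution $B$ satisfies $B^{-1}AB=B_0^{-1}AB_0$, whence $B\in\mathcal{C}(N)B_0$. Your argument is shorter and conceptually cleaner; the paper's explicit construction is what supports its later remark that all computations can be carried out effectively.
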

    \begin{proof} $i)$ is clear.\\
    $ii)$ Necessarily $C$ is in the form $C=\lambda{I}_n+M$ where $M$ is a nilpotent matrix, similar to $N$ and such that $MN=NM$. Let $d$ be the nilpotence index of $N$ or $M$. The relation $C^p=A^q$ can be written
 \begin{eqnarray}  \label{dec} (\sum_{k=2}^{d}\binom{p}{k-1}\lambda^{p-k+1}M^{k-2})M=(\sum_{k=2}^{d}\binom{q}{k-1}\lambda^{q-k+1}N^{k-2})N, \end{eqnarray}
    that is $EM=FN$ where $E,F$ are invertible matrices that both commute with $M$ and $N$. According to Proposition \ref{dur}, $A$ is a polynomial in $A^q=C^p$. Therefore $N$ is a polynomial in $C^p$ and $N$ is a polynomial in $M$. Since $N$ and $M$ are nilpotent and $\mathrm{ker}(M)=\mathrm{ker}(N)$,  one has $N=\beta_1M+\beta_2M^2+\cdots+\beta_{d-1}M^{d-1}$ where $\beta_1\not=0$. By \cite[Theorem 2]{1}, we conclude that $M$ is a polynomial in $N$ in the form
     $$M=P(N)=\alpha_1N+\alpha_2N^2+\cdots+\alpha_{d-1}N^{d-1}$$
      where $\alpha_1\not=0$. Since $N,N^2,\cdots,N^{d-1}$ are linearly independent, we can determine the $(\alpha_i)_{i=1,\cdots,d-1}$. Finally we obtain a unique solution in $M$ as a function of $\lambda$ and $N$. \\
 $iii)$ It remains to determine the matrices $B$ such that 
   \begin{equation} \label{nilp} B^{-1}NB=M. \end{equation}
   First we build a particular solution of (\ref{nilp}).
  Let $\{e_1,\cdots,e_n\}$ be a basis in which $N$ is in Jordan form: $N=\mathrm{diag}(J_{r_1},\cdots,J_{r_k})$ where $J_{r_i}$ is the Jordan nilpotent block of dimension $r_i$. The matrix $M$ is a polynomial in $N$, hence $M=\mathrm{diag}(M_1,\cdots,M_k)$ where $M_i$ has dimension $r_i$. We look for a particular solution in the form $$B_0=\mathrm{diag}(B_1,\cdots,B_k)$$
   where, for every $i$, $B_i^{-1}J_{r_i}B_i=M_i$. Thus we may assume that $N=J_n$. The set $\mathcal{B}=\{M^{n-1}(e_n),M^{n-2}(e_n),\cdots,e_n\}$ is a basis of $\mathbb{C}^n$. Let $P$ be the $\mathcal{B}$-basis matrix. We have $M=PNP^{-1}$ and we can choose $B_0=P^{-1}$. Clearly the set of solutions of (\ref{nilp}) is $\mathcal{C}(N)B_0$.
       \end{proof}
   \begin{rem} 
  $i)$ A complete description of $\mathcal{C}(N)$ is given in \cite{3} or \cite{5}.\\
  $ii)$ Knowing $A$, all the computations can be performed explicitly.
   \end{rem}
   
    \section{On the equation $A^rB^sA^{r'}B^{s'}=\epsilon{I}_2$} 
\addtocounter{equation}{-3}
    Instead of Eq. (\ref{simil}), we consider the more general matrix equation  
    \begin{equation}A^{r}B^{s}A^{r'}B^{s'}=\epsilon{I}_2 .\end{equation}
\addtocounter{equation}{2}
    We restrict ourselves to $2\times{2}$ complex matrices. The integers $r,r'$ (resp. $s,s'$) are considered to be coprime and $\epsilon=\pm{1}$.
     \begin{defi} The complex matrices $A,B$ are said simultaneously triangularizable (denoted by $ST$) if there exists an invertible complex matrix $P$ such that $P^{-1}AP$ and $P^{-1}BP$ are upper triangular.
     \end{defi}
\begin {lem} \label {inv} We may assume that the matrices $A,B$ satisfy $\mathrm{det}(A)=\mathrm{det}(B)=1$.  
  \end {lem}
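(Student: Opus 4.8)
The plan is to reduce Eq.~(\ref{equation 1}) to the case of matrices of determinant $1$ by a scaling argument. First I would take determinants of both sides of $A^rB^sA^{r'}B^{s'}=\epsilon I_2$; writing $\det(A)=a$ and $\det(B)=b$, this yields the scalar relation $a^{r+r'}b^{s+s'}=\epsilon^2=1$. So $a^{r+r'}b^{s+s'}=1$; in particular $a,b\in\mathbb{C}^*$, so $A$ and $B$ are invertible and negative powers make sense.

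The substitution I would use is $A=\alpha A_1$, $B=\beta B_1$ with scalars $\alpha,\beta\in\mathbb{C}^*$ to be chosen so that $\det(A_1)=\det(B_1)=1$, i.e. $\alpha^2=1/a$ and $\beta^2=1/b$. Since $A^k=\alpha^k A_1^k$ for all $k\in\mathbb{Z}$ (the scalar factors out of any integer power), the left-hand side becomes
$$A^rB^sA^{r'}B^{s'}=\alpha^{r+r'}\beta^{s+s'}\,A_1^rB_1^sA_1^{r'}B_1^{s'}.$$
Thus $A,B$ solve Eq.~(\ref{equation 1}) with parameter $\epsilon$ if and only if $A_1,B_1\in SL_2(\mathbb{C})$ solve it with parameter $\epsilon' = \epsilon/(\alpha^{r+r'}\beta^{s+s'})$. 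It remains to check that $\epsilon'\in\{-1,1\}$: from $a^{r+r'}b^{s+s'}=1$ we get $\alpha^{2(r+r')}\beta^{2(s+s')}=1$, hence $(\alpha^{r+r'}\beta^{s+s'})^2=1$, so $\alpha^{r+r'}\beta^{s+s'}=\pm 1$ and therefore $\epsilon'=\pm 1$. Conversely, any solution $A_1,B_1$ of the determinant-$1$ equation gives a solution of the original one by multiplying back by suitable scalars. Since $A$ and $A_1$ (resp. $B$ and $B_1$) differ only by a scalar, they have the same reducing/triangularizing subspaces and the same roots-of-unity behaviour up to a scalar, so no information is lost in the reduction.

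I do not expect a genuine obstacle here; the only point requiring a line of care is confirming that the rescaled constant stays in $\{-1,1\}$, which is exactly where the hypothesis $\mathrm{gcd}(r,r')=1$, $\mathrm{gcd}(s,s')=1$ is \emph{not} needed but the squaring trick above suffices. One should also note that the choice of square roots $\alpha,\beta$ is not unique, but any choice works and the statement only claims that we \emph{may assume} $\det(A)=\det(B)=1$, so this ambiguity is harmless.
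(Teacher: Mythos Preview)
Your argument is essentially the paper's own: rescale $A=\lambda A_1$, $B=\mu B_1$ to force $\det(A_1)=\det(B_1)=1$, take determinants to obtain $(\lambda^{r+r'}\mu^{s+s'})^2=1$, and conclude that $A_1,B_1$ satisfy the same equation with $\epsilon$ possibly replaced by $-\epsilon$. One small slip to fix: from $A=\alpha A_1$ and $\det(A_1)=1$ one gets $\alpha^2=a$, not $\alpha^2=1/a$; with your formula $\det(A_1)=a^2$, so the normalization would fail, though the rest of the computation (in particular $(\alpha^{r+r'}\beta^{s+s'})^2=1$) is unaffected once this is corrected.
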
 
  \begin{proof} Let $A,B$ be invertible matrices satisfying Eq (\ref{equation 1}). There exist $\lambda,\mu\in\mathbb{C}^*$ such that $A=\lambda{A}_1$ with $\mathrm{det}(A_1)=1$ and $B=\mu{B}_1$ with $\mathrm{det}(B_1)=1$. One has $\lambda^{2r+2r'}\mu^{2s+2s'}=1$, that is $\lambda^{r+r'}\mu^{s+s'}=\pm{1}$. Therefore $$A_1^{r}B_1^{s}A_1^{r'}B_1^{s'}=\lambda^{r+r'}\mu^{s+s'}(\epsilon{I}_2)=\pm\epsilon{I}_2.$$
  \end{proof}
  In the sequel, we assume that $\mathrm{det}(A)=\mathrm{det}(B)=1$. The case where $A$ and $B$ are ST is not interesting. Indeed we have
  \begin{prop}
     Suppose that $A,B$ are $ST$. The Eq. (\ref{equation 1}) can be reduced to a triangular system of polynomial equations in four unknowns.     
     \end{prop}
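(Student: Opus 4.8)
The plan is to put $A$ and $B$ into simultaneous upper triangular form and carry out the multiplication explicitly. Since $A,B$ are $ST$ and $\det A=\det B=1$, after replacing $(A,B)$ by a simultaneous conjugate $(P^{-1}AP,P^{-1}BP)$ — which does not affect Eq. (\ref{equation 1}), since $A^{r}B^{s}A^{r'}B^{s'}=\epsilon I_2$ holds for $(A,B)$ iff it holds for $(P^{-1}AP,P^{-1}BP)$ — we may write
$$A=\begin{pmatrix}a&x\\0&a^{-1}\end{pmatrix},\qquad B=\begin{pmatrix}b&y\\0&b^{-1}\end{pmatrix},$$
with $a,b\in\mathbb{C}^{*}$ and $x,y\in\mathbb{C}$. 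The four unknowns of the resulting system will be $a,b,x,y$, i.e. the four free entries of the triangular forms of $A$ and $B$.

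Next I would record the power formula: for $t\in\mathbb{C}^{*}$, $u\in\mathbb{C}$ and $k\in\mathbb{Z}$,
$$\begin{pmatrix}t&u\\0&t^{-1}\end{pmatrix}^{k}=\begin{pmatrix}t^{k}&u\,s_{k}(t)\\0&t^{-k}\end{pmatrix},$$
where $s_{k}(t)=t^{k-1}+t^{k-3}+\cdots+t^{1-k}$ for $k\ge 1$, $s_{0}=0$ and $s_{-k}=-s_{k}$. Each $s_{k}$ is a symmetric Laurent polynomial in $t$, so the formula is valid for every $t\in\mathbb{C}^{*}$ — in particular for $t=\pm1$, where $s_{k}(t)=\pm k$ — and $t^{|k|-1}s_{k}(t)\in\mathbb{C}[t]$; this is checked by a one-line induction. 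Applying it, $A^{r}B^{s}A^{r'}B^{s'}$ is a product of upper triangular matrices, hence upper triangular; its $(1,1)$ entry is $a^{r+r'}b^{s+s'}$, its $(2,2)$ entry is the inverse of that, and — because in a product of four upper triangular $2\times2$ matrices the $(1,2)$ entry is the sum of four terms, each using the $(1,2)$ entry of exactly one factor together with diagonal entries of the others — its $(1,2)$ entry is
$$x\Bigl(a^{r}b^{s-s'}s_{r'}(a)+a^{-r'}b^{-s-s'}s_{r}(a)\Bigr)+y\Bigl(a^{r+r'}b^{s}s_{s'}(b)+a^{r-r'}b^{-s'}s_{s}(b)\Bigr),$$
which is \emph{linear} in $(x,y)$ with coefficients that are Laurent polynomials in $a,b$; call them $c_{1}(a,b)$ and $c_{2}(a,b)$.

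Consequently Eq. (\ref{equation 1}) is equivalent to the pair of equations $a^{r+r'}b^{s+s'}=\epsilon$ (the $(2,2)$ equation being then automatic, as $\epsilon^{2}=1$) and $x\,c_{1}(a,b)+y\,c_{2}(a,b)=0$. Multiplying each equation by a suitable monomial $a^{N}b^{M}$ clears all denominators and turns both into genuine polynomial equations. Finally this is a triangular system in $a,b,x,y$: the first equation is a binomial equation, so once $a$ is chosen the variable $b$ satisfies a one-variable polynomial equation with complex coefficients (and if $s+s'=0$ it is instead a polynomial condition on $a$ alone, $b$ being then free); with $a,b$ fixed, the remaining equation is of degree $\le1$ in $x,y$ and expresses one of them as an affine function of the other. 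Thus, in the $ST$ case, Eq. (\ref{equation 1}) reduces to a triangular system of polynomial equations in four unknowns. The argument is entirely routine and there is no real obstacle, this proposition serving only to set aside the easy case; the only points needing a little care are handling negative exponents and coincident eigenvalues ($a^{2}=1$ or $b^{2}=1$) uniformly — which the symmetric Laurent polynomials $s_{k}$ do automatically — and checking that the superdiagonal entry of the product is linear, not merely polynomial, in $x$ and $y$, which is exactly what makes the system triangular.
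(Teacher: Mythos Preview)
Your proposal is correct and follows essentially the same approach as the paper: reduce to simultaneous upper triangular form with determinant one, obtain the diagonal condition $a^{r+r'}b^{s+s'}=\epsilon$, and observe that the $(1,2)$ entry is linear in the two off-diagonal unknowns with coefficients depending only on the diagonal unknowns. Your version is in fact more detailed than the paper's, which simply asserts ``one checks easily'' without writing out the power formula or the explicit $(1,2)$ entry.
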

     \begin{proof}
    We may assume that $A,B$ are upper triangular matrices in the form $$A=\begin{pmatrix}u&v\\0&1/u\end{pmatrix},B=\begin{pmatrix}p&q\\0&1/p\end{pmatrix},$$
    where $(p,u,q,v)\in(\mathbb{C}^*)^2\times\mathbb{C}^2$.
     One checks easily that Eq. (\ref{equation 1}) is equivalent to a system in the form
      $$\left\{\begin{array}{cl}u^{r+r'}p^{s+s'}&=\epsilon\\
      v\phi(u,p)+q\psi(u,p)&=0\;,\end{array}\right.$$
      where $\phi,\psi$ are polynomials that depend on $r,r',s,s'$.
     \end{proof}
     In the sequel, we assume that $A,B$ are not $ST$, that is they have no common eigenvectors.
      \begin {lem} \label{sym}   Let $A,B\in\mathcal{M}_2(\mathbb{C})$ be not $ST$. They are simultaneously similar to two symmetric matrices.  
  \end {lem}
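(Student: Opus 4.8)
The plan is to reduce the statement to producing a single invertible \emph{complex symmetric} matrix $S$ such that $AS$ and $BS$ are both symmetric. Given such an $S$, one can write $S=PP^{T}$ for an invertible $P$: over $\mathbb{C}$ every nondegenerate symmetric bilinear form is equivalent to the standard one, so $Q^{T}SQ=I_2$ for some invertible $Q$ and $P=(Q^{-1})^{T}$ works (equivalently, take $P$ to be a symmetric square root of $S$, which exists by the holomorphic functional calculus since $0\notin\sigma(S)$, and which is symmetric being a polynomial in $S$). Then the identity $AS=SA^{T}$ rewrites, after multiplying by $P^{-1}$ and $(P^{T})^{-1}$, as $(P^{-1}AP)^{T}=P^{-1}AP$, and likewise $BS=SB^{T}$ gives $(P^{-1}BP)^{T}=P^{-1}BP$. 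So it suffices to find $S$.

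To find $S$, I would look at the linear map $\Phi\colon S\mapsto(AS-SA^{T},\,BS-SB^{T})$ defined on the $3$-dimensional space of symmetric $2\times2$ complex matrices. If $S=S^{T}$, then $(AS-SA^{T})^{T}=SA^{T}-AS=-(AS-SA^{T})$, so both components of $\Phi(S)$ are antisymmetric; hence $\Phi$ takes values in a space of dimension $2$, and $W:=\ker\Phi=\{S=S^{T}:AS=SA^{T},\ BS=SB^{T}\}$ has dimension $\geq 3-2=1$. Fix any nonzero $S_{0}\in W$.

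The crux of the proof — and the only place where the hypothesis that $A,B$ are not $ST$ is used — is that this $S_{0}$ is automatically invertible. If it were not, it would have rank $1$; the relation $AS_{0}=S_{0}A^{T}$ then gives $A(\mathrm{Im}(S_{0}))\subseteq\mathrm{Im}(S_{0})$, and since $\mathrm{Im}(S_{0})$ is a line it is spanned by an eigenvector of $A$, and symmetrically, using $BS_{0}=S_{0}B^{T}$, by an eigenvector of $B$. Thus $A$ and $B$ would have a common eigenvector, i.e.\ be $ST$, a contradiction. Hence $S_{0}$ is invertible, and the first paragraph finishes the argument. I expect the two routine ingredients — the dimension count giving $W\neq 0$ and the factorization $S_{0}=PP^{T}$ — to cause no difficulty; the rank-$1$ exclusion is the single step that genuinely exploits the irreducibility of the pair $(A,B)$, and is where I would focus the write-up.
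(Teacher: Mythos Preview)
Your proof is correct and takes a genuinely different route from the paper's. The paper argues by cases: if $A$ is diagonalizable, put $A$ in diagonal form so that $B=\begin{pmatrix}a&b\\c&d\end{pmatrix}$ with $bc\neq 0$ (by non-$ST$), then conjugate by $\mathrm{diag}(x,y)$ with $x^{2}=b$, $y^{2}=c$; if neither $A$ nor $B$ is diagonalizable, write $A=\lambda I_{2}+M$, $B=\mu I_{2}+N$ with $M,N$ nilpotent and not $ST$, put $M,N$ in the form $\begin{pmatrix}0&\alpha\\0&0\end{pmatrix}$, $\begin{pmatrix}0&0\\\beta&0\end{pmatrix}$ using a basis made of one eigenvector of each, and conjugate by $P=\begin{pmatrix}1&i\\1&-i\end{pmatrix}$. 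Your argument instead avoids any case split by a dimension count on the $3$-to-$2$ linear map $S\mapsto(AS-SA^{T},\,BS-SB^{T})$ from symmetric to antisymmetric pairs, and then uses the non-$ST$ hypothesis exactly once to rule out rank-$1$ solutions. The paper's approach is more hands-on and gives explicit conjugating matrices; yours is cleaner, uses the irreducibility of $(A,B)$ in a single transparent step, and is the one that would generalize (the same bilinear-form trick shows, for instance, that any irreducible subset of $\mathcal{M}_{2}(\mathbb{C})$ is simultaneously similar to symmetric matrices).
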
 
  \begin{proof}
  $Case\;\; 1.$ $A$ is diagonalizable. We may asssume that $A$ is diagonal and $B=\begin{pmatrix}a&b\\c&d\end{pmatrix}$ with $bc\not=0$. Write $P=\begin{pmatrix}x&0\\0&y\end{pmatrix}$, with $x^2=b,y^2=c$. The matrices $P^{-1}AP$ and $P^{-1}BP$ are symmetric.\\
  $Case \;\;2.$ $A,B$ are not diagonalizable. One has $A=\lambda{I}_2+M,B=\mu{I}_2+N$ where $\lambda,\mu\in\mathbb{C}$ and $M,N$ are nilpotent matrices that are not $ST$. We can construct a basis of $\mathbb{C}^2$ containing an eigenvector of $M$ and an eigenvector of $N$. Thus we may assume that $$A=M=\begin{pmatrix}0&\alpha\\0&0\end{pmatrix}\text{\;and\;}
   B=N=\begin{pmatrix}0&0\\\beta&0\end{pmatrix},$$
    with $\alpha,\beta\in\mathbb{C}-\{0\}$. For $P=\begin{pmatrix}1&i\\1&-i\end{pmatrix}$, $P^{-1}AP$ and $P^{-1}BP$ are symmetric.
  \end{proof}
  \begin {cor} \label{orthog} Let $A,B\in\mathcal{M}_2(\mathbb{C})$ that are not $ST$. If $A^{r}B^{s}A^{r'}B^{s'}=\epsilon{I}_2$, then $A^{-r}B^{-s}A^{-r'}B^{-s'}=\epsilon{I}_2$.  
  \end {cor}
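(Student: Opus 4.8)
The plan is to reduce to symmetric matrices via Lemma~\ref{sym} and then play transposition off against inversion.

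First I would observe that both the hypothesis $A^{r}B^{s}A^{r'}B^{s'}=\epsilon I_2$ and the desired conclusion $A^{-r}B^{-s}A^{-r'}B^{-s'}=\epsilon I_2$ are invariant under simultaneous conjugation $A\mapsto P^{-1}AP$, $B\mapsto P^{-1}BP$ with $P\in GL_2(\mathbb{C})$. Since $A,B$ are not $ST$, Lemma~\ref{sym} then allows us to assume from now on that $A=A^{T}$ and $B=B^{T}$.

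Next, since $\epsilon\in\{-1,1\}$ we have $\epsilon^{-1}=\epsilon$, so inverting the relation $A^{r}B^{s}A^{r'}B^{s'}=\epsilon I_2$ yields $B^{-s'}A^{-r'}B^{-s}A^{-r}=\epsilon I_2$. Finally I would transpose this last identity: as $A$ and $B$ are symmetric, every integer power $A^{k}$ and $B^{k}$ is symmetric, and transposition reverses the order of a product, so the left-hand side becomes $A^{-r}B^{-s}A^{-r'}B^{-s'}$ while the right-hand side is unchanged, giving $A^{-r}B^{-s}A^{-r'}B^{-s'}=\epsilon I_2$.

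The only genuine content here is the reduction to symmetric matrices furnished by Lemma~\ref{sym}; once that is in hand the corollary is a one-line manipulation with transpose and inverse, and I do not anticipate any real obstacle.
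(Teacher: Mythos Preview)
Your argument is correct and is exactly the approach of the paper: reduce to symmetric $A,B$ via Lemma~\ref{sym}, then invert and transpose. The paper's proof simply says ``easy computation'' at the last step, and you have written out precisely that computation.
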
 
  \begin{proof} By Lemma \ref{sym}, we may assume that $A,B$ are symmetric matrices. Easy computation allows to conclude the assertion.  
  \end{proof}
    \begin {prop} \label{rac} Let $A,B\in{S}L_2(\mathbb{C})$ that are not $ST$ and satisfy Eq. (\ref{equation 1}). Then
     $$(A^{r-r'}=\pm{I}_2 \text{ or } B^{s}=\pm{I}_2) \text{ and }(B^{s-s'}=\pm{I}_2 \text{ or } A^{r'}=\pm{I}_2).$$  
  \end {prop}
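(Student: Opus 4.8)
The plan is to combine the cyclic invariance of Eq.~(\ref{equation 1}) with the ``inverse'' relation of Corollary~\ref{orthog} to produce a commutation relation, and then to confront that relation with the hypothesis that $A,B$ are not $ST$ by using the rigidity of powers in $SL_2(\mathbb{C})$.

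First I would prove that $A^{r-r'}$ commutes with $B^{s}$. A cyclic rotation of Eq.~(\ref{equation 1}) brings $B^{s'}$ to the front: $B^{s'}A^{r}B^{s}A^{r'}=\epsilon I_2$. On the other hand, inverting the relation $A^{-r}B^{-s}A^{-r'}B^{-s'}=\epsilon I_2$ supplied by Corollary~\ref{orthog}, and using $\epsilon^{-1}=\epsilon$, gives $B^{s'}A^{r'}B^{s}A^{r}=\epsilon I_2$. Equating the two and cancelling $B^{s'}$ on the left yields $A^{r}B^{s}A^{r'}=A^{r'}B^{s}A^{r}$; multiplying this on the left by $A^{-r'}$ and on the right by $A^{-r}$ gives $A^{r-r'}B^{s}A^{r'-r}=B^{s}$, that is, $A^{r-r'}$ commutes with $B^{s}$. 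This is the only point where the non-$ST$ hypothesis is used, namely through Corollary~\ref{orthog}.

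Next I would invoke the following rigidity fact: if $M\in SL_2(\mathbb{C})$ and $k\in\mathbb{Z}$ satisfy $M^{k}\neq\pm I_2$, then $M$ and $M^{k}$ have exactly the same eigenlines. Indeed, if $M$ is diagonalizable its two eigenvalues cannot have the same $k$-th power, since (as $\det M=1$) that common value would then be $\pm1$ and $M^{k}$ would be $\pm I_2$; hence $M^{k}$ is diagonalizable with the same pair of eigenlines as $M$. And if $M$ is not semisimple, then both $M$ and $M^{k}$ are, up to the scalar $\pm1$, single $2\times2$ Jordan blocks, with the same (unique) eigenline. Now assume, towards a contradiction, that $A^{r-r'}\neq\pm I_2$ and $B^{s}\neq\pm I_2$. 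By the previous paragraph $A^{r-r'}$ and $B^{s}$ commute, hence, being commuting complex matrices, they have a common eigenvector $v$; applying the rigidity fact with $(M,k)=(A,r-r')$ and with $(M,k)=(B,s)$, the vector $v$ is an eigenvector of $A$ and of $B$, contradicting that $A,B$ are not $ST$. Therefore $A^{r-r'}=\pm I_2$ or $B^{s}=\pm I_2$.

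For the second half of the statement I would argue by symmetry: a cyclic rotation of Eq.~(\ref{equation 1}) gives $B^{s}A^{r'}B^{s'}A^{r}=\epsilon I_2$, which has exactly the shape of Eq.~(\ref{equation 1}) with the roles of $A$ and $B$ interchanged and the exponents $(r,s,r',s')$ replaced by $(s,r',s',r)$; all the relevant hypotheses are preserved, so the argument of the two previous paragraphs applied to this equation yields $B^{s-s'}=\pm I_2$ or $A^{r'}=\pm I_2$. I expect the delicate step to be the rigidity fact: ensuring that passing to a power produces no new eigenlines is precisely what makes a common eigenvector of $A^{r-r'}$ and $B^{s}$ descend to a common eigenvector of $A$ and $B$, and it is the constraint $\det=1$ that makes this work.
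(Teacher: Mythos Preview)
Your argument is correct and is essentially the paper's own: combine Eq.~(\ref{equation 1}) with Corollary~\ref{orthog} to obtain $A^{r-r'}B^{s}=B^{s}A^{r-r'}$, observe that commuting $2\times2$ matrices share an eigenvector, and note that this eigenvector descends to $A$ and $B$ unless one of the two powers is $\pm I_2$. Your explicit ``rigidity fact'' is precisely what the paper compresses into the phrase ``That is impossible but if $B^{s}=\pm I_2$ or $A^{r-r'}=\pm I_2$''; the only slip is your side remark that the non-$ST$ hypothesis is used solely through Corollary~\ref{orthog}, since you invoke it again to reach the final contradiction.
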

  \begin{proof}
 By Corollary \ref{orthog}, we find easily $A^{-r}B^{-s}A^{-r'}=A^{-r'}B^{-s}A^{-r}$, that is $A^{r-r'}B^{s}=B^{s}A^{r-r'}$. Since  $B^{s}$ and $A^{r-r'}$ commute, they have a common eigenvector. That is impossible but if $B^{s}=\pm{I}_2$ or $A^{r-r'}=\pm{I}_2$. By symmetry, the second relation holds. 
  \end{proof}
  \textbf{Notation.}
Let $k\in\mathbb{Z}$. Denote by $\phi_k$ the continuous function defined on $\mathbb{C}^*$ by
$$\phi_k(t)=\left\{\begin{array}{cl}\dfrac{1-t^{2k}}{t^{k-1}(1-t^2)}&\text{ if }t\not=\pm{1}\\
k&\text{ if }t=1\\
(-1)^{k-1}k&\text{ if }t=-1\text{\;\;.}\end{array}\right.$$
  \begin{lem} \label{decomp} Let $A,B\in{S}L_2(\mathbb{C})$ that are not $ST$. The following assertions hold\\
   i) The matrices $A,B$ are simultaneously similar to matrices in the form $$A_1=\begin{pmatrix}u&v\\0&u^{-1}\end{pmatrix}\;,\;B_1=\begin{pmatrix}p&0\\q&p^{-1}\end{pmatrix},$$
    where $u,v,p,q$ are non-zero complex numbers such that
    \begin{equation}   \label{inequal}  (p^2-1)(u^2-1)+uvpq\not=0.    \end{equation}  
   ii) For every $k\in\mathbb{Z}$, we have 
 $$A_1^k=\begin{pmatrix}u^k&v\phi_k(u)\\0&u^{-k}\end{pmatrix}\;,\; B_1^{k}=\begin{pmatrix}p^{k}&0\\q\phi_k(p)&p^{-k}\end{pmatrix}.$$
   iii) Let $\alpha\in\{1,-1\}$. The equality $A_1^k=\alpha{I}_2$ (resp. $B_1^k=\alpha{I}_2$) holds if and only if $u^k=\alpha$ and $u^2\not=1$ (resp. $p^k=\alpha$ and $p^2\not=1$).  
  \end{lem}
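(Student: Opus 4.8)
The plan is to treat the three assertions separately; only i) requires an idea, and ii)--iii) are direct computations built on it.

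\emph{Assertion i).} Since $A,B$ are not $ST$, they have no common eigenvector. I would pick an eigenvector $e_1$ of $A$ and an eigenvector $e_2$ of $B$; if $e_1,e_2$ were proportional they would be a common eigenvector, so $(e_1,e_2)$ is a basis of $\mathbb{C}^2$. In this basis $Ae_1\in\mathbb{C}e_1$ and $Be_2\in\mathbb{C}e_2$, so $A$ is upper triangular and $B$ is lower triangular; as $\det A=\det B=1$, their diagonals are $(u,u^{-1})$ and $(p,p^{-1})$ with $u,p\neq0$, which yields the shapes $A_1,B_1$. If $v=0$ then $A_1$ is diagonal, so $e_2$ would also be an eigenvector of $A_1$, a common eigenvector --- contradiction; hence $v\neq0$, and symmetrically $q\neq0$. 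For the inequality (\ref{inequal}), I would list all eigenlines of the pair: for $A_1$ these are $\mathbb{C}e_1$ and, when $u^2\neq1$, $\mathbb{C}(v,\,u^{-1}-u)^{T}$; for $B_1$ they are $\mathbb{C}e_2$ and, when $p^2\neq1$, $\mathbb{C}(p-p^{-1},\,q)^{T}$. Using $v,q\neq0$, neither of the two new lines coincides with $\mathbb{C}e_1$ or $\mathbb{C}e_2$, so the only possible coincidence is, when $u^2\neq1$ and $p^2\neq1$, collinearity of $(v,u^{-1}-u)^{T}$ and $(p-p^{-1},q)^{T}$, i.e. $vq=(u^{-1}-u)(p-p^{-1})$; multiplying by $up$ this is exactly $(u^2-1)(p^2-1)+uvpq=0$. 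So, under the non-$ST$ hypothesis, $(u^2-1)(p^2-1)+uvpq\neq0$ whenever $u^2\neq1$ and $p^2\neq1$; and when $u^2=1$ (resp. $p^2=1$) that quantity equals $uvpq=\pm vpq\neq0$ (resp. $\pm uvq\neq0$). Hence (\ref{inequal}) holds in all cases.

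\emph{Assertion ii).} First I would record the identity $\phi_{k+1}(t)=t^{k}+t^{-1}\phi_k(t)$ for all $k\in\mathbb{Z}$ and $t\neq0,\pm1$ (a one-line manipulation of the defining fraction), together with $\phi_0\equiv0$ and $\phi_1\equiv1$. Then the formula for $A_1^{k}$ follows for $k\geq1$ by induction from $A_1^{k+1}=A_1^{k}A_1$, whose $(1,2)$-entry is $v\bigl(u^{k}+u^{-1}\phi_k(u)\bigr)=v\phi_{k+1}(u)$; the case $k\leq0$ follows by running the recursion downward, or by applying the result to $A_1^{-1}=\begin{pmatrix}u^{-1}&-v\\0&u\end{pmatrix}$. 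The values $u=\pm1$ are handled apart: writing $A_1=uI_2+M$ with $M=\begin{pmatrix}0&v\\0&0\end{pmatrix}$ and $M^2=0$ gives $A_1^{k}=u^{k}I_2+ku^{k-1}M$, which matches the formula since $\phi_k(1)=k$ and $\phi_k(-1)=(-1)^{k-1}k$ are exactly the limits of the fraction at $1$ and $-1$; this also confirms that $\phi_k$ is the stated continuous function. The expression for $B_1^{k}$ is obtained identically, or by transposing.

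\emph{Assertion iii).} By ii), the equality $A_1^{k}=\alpha I_2$ amounts to $u^{k}=\alpha$ (which, since $\alpha=\pm1$, also gives $u^{-k}=\alpha$) together with $v\phi_k(u)=0$, i.e. $\phi_k(u)=0$ because $v\neq0$. If $u^2\neq1$, then $\phi_k(u)=0\iff u^{2k}=1$, and $u^{2k}=1$ is automatic once $u^{k}=\alpha=\pm1$; hence $A_1^{k}=\alpha I_2$ iff $u^{k}=\alpha$ and $u^2\neq1$. If instead $u^2=1$, then $\phi_k(u)=\pm k\neq0$ for $k\neq0$, so $A_1^{k}\neq\pm I_2$, which shows the hypothesis $u^2\neq1$ is essential; the case of $B_1$ is identical. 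I do not expect any step to be a real obstacle: the only points requiring care are the case split $u^2=1$ versus $u^2\neq1$ (and likewise for $p$) in i), where one must check the quantity $(u^2-1)(p^2-1)+uvpq$ stays nonzero in the degenerate cases, and the behaviour of $\phi_k$ at $t=\pm1$ in ii).
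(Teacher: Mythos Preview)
Your proof is correct and follows the same construction as the paper for i) --- choosing an eigenvector of $A$ and one of $B$ as a basis --- and, like the paper, treats ii) and iii) as routine computations (which you spell out in more detail than the paper does).

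The one genuine difference is in how you establish inequality (\ref{inequal}). The paper invokes the $2\times2$ criterion that $A_1,B_1$ are $ST$ if and only if $\det([A_1,B_1])=0$, and then simply computes this determinant. You instead enumerate the eigenlines of $A_1$ and $B_1$ explicitly and check pairwise for coincidence. Your route is more self-contained (it does not rely on knowing the commutator criterion) and makes the case split $u^2=1$ or $p^2=1$ transparent; the paper's route is shorter once that criterion is granted and avoids the case split altogether, since the determinant formula is uniform in $u,p$. Both are perfectly adequate here.
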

  \begin{proof}
 $i)$ There exists a basis of $\mathbb{C}^2$ containing an eigenvector of $A$ and an eigenvector of $B$. Such a change of basis transforms $A$ and $B$ in the form 
$$A_1=\begin{pmatrix}u&v\\0&u^{-1}\end{pmatrix}\;,\;B_1=\begin{pmatrix}p&0\\q&p^{-1}\end{pmatrix}$$
where $uvpq\not=0$. Moreover $A_1$ and $B_1$ are not $ST$ is equivalent to $\det([A_1,B_1])\not=0$, that is Relation (\ref{inequal}).
\\
 $ii)$ and $iii)$ are straightforward computations.
  \end{proof}
  \begin{lem} \label{elem}
   Let $A,B\in\mathcal{M}_2(\mathbb{C})$. The matrices $A,B$ have determinant $1$, are not $ST$ and $(AB)^2=-I_2$ if and only if $A,B$ are simultaneously similar to the matrices $A_1,B_1$ defined in Lemma \ref{decomp} with the condition
    \begin{equation}   \label{cond}  qv=\dfrac{-1-u^2p^2}{up},\text{ where } \left\{\begin{array}{l}up\not=0\\u^2+p^2\not=0\\1+u^2p^2\not=0\;.\end{array}\right.  \end{equation} 
  \end{lem}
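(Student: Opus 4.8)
The plan is to reduce the whole statement, via Lemma \ref{decomp}.$i)$, to a computation with the normal forms $A_1,B_1$, and then to use the elementary fact that a matrix $M\in SL_2(\mathbb{C})$ satisfies $M^2=-I_2$ if and only if $\mathrm{tr}(M)=0$; this is what lets one scalar equation on the trace encode the full matrix identity $(AB)^2=-I_2$. All the hypotheses ($\det A=\det B=1$, not $ST$) and the conclusion $(AB)^2=-I_2$ are invariant under simultaneous similarity, so there is no loss in working with $A_1,B_1$.

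For the ``only if'' direction I would argue as follows. Since $A,B\in SL_2(\mathbb{C})$ are not $ST$, Lemma \ref{decomp}.$i)$ puts them (up to simultaneous similarity) in the form $A_1,B_1$ with $uvpq\neq 0$ and the non-$ST$ inequality (\ref{inequal}). A direct multiplication gives $A_1B_1=\begin{pmatrix}up+vq & v/p\\ q/u & 1/(up)\end{pmatrix}\in SL_2(\mathbb{C})$. From $(A_1B_1)^2=-I_2$ the minimal polynomial of $A_1B_1$ divides $X^2+1$, so its eigenvalues lie in $\{i,-i\}$; their product being $\det(A_1B_1)=1$ forces them to be $i$ and $-i$, hence $\mathrm{tr}(A_1B_1)=up+vq+\frac{1}{up}=0$. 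Multiplying by $up\neq 0$ yields $u^2p^2+uvpq+1=0$, i.e. $qv=\frac{-1-u^2p^2}{up}$, the main equality of (\ref{cond}). For the side conditions: $up\neq 0$ is automatic from the shape of $A_1,B_1$; $1+u^2p^2\neq 0$ because otherwise $qv=0$, contradicting $uvpq\neq 0$; and substituting $uvpq=-(1+u^2p^2)$ into the non-$ST$ quantity $(p^2-1)(u^2-1)+uvpq$ collapses it to $-(u^2+p^2)$, so $u^2+p^2\neq 0$.

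For the converse, assume $A,B$ are simultaneously similar to $A_1,B_1$ subject to (\ref{cond}). The conditions $up\neq 0$ and $1+u^2p^2\neq 0$ force $v,q\neq 0$, so $A_1,B_1$ genuinely have the form required in Lemma \ref{decomp}, and in particular $\det A=\det B=1$. Using $uvpq=up\cdot qv=-(1+u^2p^2)$, the non-$ST$ quantity equals $-(u^2+p^2)\neq 0$, so $A,B$ are not $ST$. Finally $\mathrm{tr}(A_1B_1)=up+qv+\frac{1}{up}=up-\frac{u^2p^2}{up}=0$, and since $A_1B_1\in SL_2(\mathbb{C})$, Cayley--Hamilton gives $(A_1B_1)^2=-I_2$, hence $(AB)^2=-I_2$. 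The argument is essentially bookkeeping; the only genuinely delicate points are the equivalence $M^2=-I_2\Leftrightarrow\mathrm{tr}\,M=0$ on $SL_2(\mathbb{C})$ and the observation that, once $qv=\frac{-1-u^2p^2}{up}$ is imposed, the non-$ST$ inequality (\ref{inequal}) simplifies to exactly $u^2+p^2\neq 0$ — which is why the three listed side conditions are precisely the right ones and nothing more is needed.
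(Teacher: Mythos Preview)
Your proof is correct and follows exactly the approach the paper has in mind: the paper's own proof reads simply ``This follows easily from Lemma \ref{decomp}.$i)$,'' and what you have written is precisely that easy computation spelled out in full, including the key reduction $M^2=-I_2\Leftrightarrow \mathrm{tr}(M)=0$ in $SL_2(\mathbb{C})$ and the verification that (\ref{inequal}) collapses to $-(u^2+p^2)$ under the trace condition.
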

  \begin{proof}
  This follows easily from Lemma \ref{decomp}. $i)$.
  \end{proof}
  \begin{prop}  \label{fond}
  Let $A,B\in{S}L_2(\mathbb{C})$ that are not $ST$. The matrices $A,B$ satisfy Eq. (\ref{equation 1}) if and only if there exists $\alpha\in\{-1,1\}$ such that $$A^{r-r'}=\alpha{I}_2,\;B^{s-s'}=-\alpha\epsilon{I}_2 \text{\;and\;} (A^{r}B^{s})^2=-I_2.$$
     \end{prop}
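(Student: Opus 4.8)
The plan is to establish the two implications separately; the sufficiency direction is a one-line computation, while necessity rests on Proposition~\ref{rac} and a couple of B\'ezout arguments.

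\textbf{Sufficiency.} Given $A^{r-r'}=\alpha I_2$, $B^{s-s'}=-\alpha\epsilon I_2$ and $(A^rB^s)^2=-I_2$ with $\alpha\in\{-1,1\}$, I would use $\alpha^2=(\alpha\epsilon)^2=1$ to rewrite the first two relations as $A^{r'}=\alpha A^r$ and $B^{s'}=-\alpha\epsilon B^s$. Substituting into the left side of Eq.~(\ref{equation 1}) and moving the scalar matrices to the front yields $-\alpha^2\epsilon\,(A^rB^s)^2=-\epsilon(-I_2)=\epsilon I_2$.

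\textbf{Necessity.} Suppose $A,B\in SL_2(\mathbb{C})$ are not $ST$ and satisfy Eq.~(\ref{equation 1}). First I would invoke Proposition~\ref{rac}, giving ($A^{r-r'}=\pm I_2$ or $B^s=\pm I_2$) and ($B^{s-s'}=\pm I_2$ or $A^{r'}=\pm I_2$), and then discard the exceptional alternatives. If $B^s=\beta I_2$ with $\beta=\pm1$, then Eq.~(\ref{equation 1}) collapses to $A^{r+r'}B^{s'}=\beta\epsilon I_2$, so $B^{s'}$ is a nonzero scalar times a power of $A$; combining this with $B^s=\beta I_2$ through a relation $1=as+bs'$ (available since $\gcd(s,s')=1$) writes $B=(B^s)^a(B^{s'})^b$ as a scalar times a power of $A$, hence $A$ and $B$ commute, share an eigenvector, and are $ST$ --- a contradiction. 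The symmetric computation excludes $A^{r'}=\pm I_2$. Thus $A^{r-r'}=\alpha I_2$ and $B^{s-s'}=\beta I_2$ for some $\alpha,\beta\in\{-1,1\}$, so $A^{r'}=\alpha A^r$, $B^{s'}=\beta B^s$, and Eq.~(\ref{equation 1}) becomes $(A^rB^s)^2=\alpha\beta\epsilon\,I_2$.

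It remains to prove $\alpha\beta\epsilon=-1$, which I view as the crux. Arguing by contradiction: if $(A^rB^s)^2=I_2$, the minimal polynomial of $M:=A^rB^s$ divides $(X-1)(X+1)$, so $M$ is diagonalizable with spectrum contained in $\{1,-1\}$; but $\det M=\det(A)^r\det(B)^s=1$ excludes the spectrum $\{1,-1\}$, forcing $M=\pm I_2$, i.e. $A^r=\pm B^{-s}$. Together with $A^{r-r'}=\alpha I_2$ and $\gcd(r,r-r')=\gcd(r,r')=1$, the same B\'ezout trick makes $A$ a scalar times a power of $B$, again contradicting non-$ST$. Hence $\alpha\beta\epsilon=-1$, that is $\beta=-\alpha\epsilon$ and $(A^rB^s)^2=-I_2$, which is the assertion. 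The main subtlety is precisely this last step: one must use that $M^2=I_2$ with $\det M=1$ genuinely forces $M=\pm I_2$ in $SL_2$ (it fails in $GL_2$), and one must verify that the B\'ezout arguments also cover the degenerate cases $r=r'$ (so $r=r'=\pm1$) and $r+r'=0$, where $A$ or $B$ reduces to $\pm I_2$ or to a scalar power of the other and non-$ST$ is violated just the same.
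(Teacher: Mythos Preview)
Your proof is correct and follows essentially the same route as the paper: invoke Proposition~\ref{rac}, rule out the ``bad'' disjuncts to land in the case $A^{r-r'}=\alpha I_2$, $B^{s-s'}=\beta I_2$, and then use $\det(A^rB^s)=1$ together with $(A^rB^s)^2=I_2\Rightarrow A^rB^s=\pm I_2$ to exclude $\alpha\beta\epsilon=1$. The only cosmetic difference is that where the paper argues by cases (its Cases~1--3, and the phrase ``for instance $A^r=\pm I_2$'' in Case~4, which hides a common-eigenvector argument), you instead run explicit B\'ezout identities to force $A$ or $B$ to be a scalar times a power of the other; this is a cleaner packaging of the same idea.
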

  \begin{proof}
  According to Proposition \ref{rac}, we have to consider the following cases.\\
  $Case\;\;1$. $A^{r-r'}=\pm{I}_2$ and $A^{r'}=\pm{I}_2$. This implies that $A^{r}=\pm{I}_2$. Since $\mathrm{gcd}(r,r')=1$, $A=\pm{I}_2$, that is impossible.\\
  $Case\;\;2$. $B^{s}=\pm{I}_2$ and $B^{s-s'}=\pm{I}_2$. By symmetry of case 1.\\
    $Case\;\; 3$. $B^{s}=\pm{I}_2$ and $A^{r'}=\pm{I}_2$. One has $A^{r}B^{s'}=\pm{I}_2$. Then, for instance, $A^{r}=\pm{I}_2$ and finally $A=\pm{I}_2$. This is impossible.\\
  $Case\;\; 4$. There exist $\alpha,\beta\in\{-1,1\}$ such that $A^{r-r'}=\alpha{I}_2$ and $B^{s-s'}=\beta{I}_2$.
  One has $(A^{r}B^{s})^2=\pm{I}_2$. 
  Assume $(A^{r}B^{s})^2=I_2$. The relations $\mathrm{det}(A)=\mathrm{det}(B)=1$ imply $A^{r}B^{s}=\pm{I}_2$. Then, for instance $A^r=\pm{I}_2$ and we conclude as in Case 3.
 Finally $(A^{r}B^{s})^2=-I_2$ and $\beta=-\alpha\epsilon$.\\
  The converse is clear.
  \end{proof} 
    \begin{cor} \label{imp}
  If $r-r'=\pm{1}$ or $s-s'=\pm{1}$, then there are no pairs $(A,B)\in{S}L_2(\mathbb{C})$ that are not $ST$ and satisfy Eq. (\ref{equation 1}). 
  \end{cor}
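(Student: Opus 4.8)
The plan is to derive a contradiction from Proposition \ref{fond} in either case. Suppose a pair $(A,B)\in SL_2(\mathbb{C})$ that is not $ST$ satisfies Eq. (\ref{equation 1}); by Proposition \ref{fond} there is $\alpha\in\{-1,1\}$ with $A^{r-r'}=\alpha I_2$, $B^{s-s'}=-\alpha\epsilon I_2$ and $(A^rB^s)^2=-I_2$. First consider the hypothesis $r-r'=\pm 1$. Then $A^{r-r'}=\alpha I_2$ says $A^{\pm 1}=\alpha I_2$, hence $A=\pm I_2$. But then $A$ is scalar, so $A$ and $B$ share every eigenvector of $B$, contradicting the assumption that $A,B$ are not $ST$. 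The case $s-s'=\pm 1$ is identical after exchanging the roles of $A$ and $B$: $B^{s-s'}=-\alpha\epsilon I_2$ forces $B=\pm I_2$, again making the pair $ST$.

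Actually one can phrase this even more economically without invoking the full strength of Proposition \ref{fond}: if $r-r'=\pm 1$, then from $\gcd(r,r')=1$ together with Proposition \ref{rac} one gets directly that $A^{r-r'}=\pm I_2$ would force $A=\pm I_2$, but the alternative branch of Proposition \ref{rac} (the clause ``$A^{r-r'}=\pm I_2$ or $B^s=\pm I_2$'') still needs to be handled, and $B^s=\pm I_2$ with $A^{r'}=\pm I_2$ or $B^{s-s'}=\pm I_2$ leads back to one of the impossible cases in the proof of Proposition \ref{fond}. So it is cleaner simply to quote Proposition \ref{fond}. The write-up would therefore be: by Proposition \ref{fond}, any such pair satisfies $A^{r-r'}=\alpha I_2$; if $r-r'=\pm 1$ this gives $A=\pm I_2$, contradicting that $A,B$ are not $ST$; symmetrically if $s-s'=\pm 1$, then $B^{s-s'}=-\alpha\epsilon I_2$ gives $B=\pm I_2$, the same contradiction.

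There is essentially no obstacle here: the corollary is an immediate consequence of the normal form established in Proposition \ref{fond}, and the only thing to be careful about is the logical bookkeeping — making sure that ``not $ST$'' is genuinely violated by a scalar matrix (which is clear, since a scalar matrix commutes with everything and every eigenvector of the other matrix becomes a common eigenvector) and that one has correctly used $\gcd(r,r')=1$ is in fact not even needed once Proposition \ref{fond} is in hand, since $A^{\pm 1}=\alpha I_2$ already pins down $A$. I would present it as a two-sentence proof citing Proposition \ref{fond}.

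\begin{proof}
Suppose on the contrary that $(A,B)\in SL_2(\mathbb{C})$ is a pair that is not $ST$ and satisfies Eq. (\ref{equation 1}). By Proposition \ref{fond}, there exists $\alpha\in\{-1,1\}$ such that $A^{r-r'}=\alpha I_2$ and $B^{s-s'}=-\alpha\epsilon I_2$. If $r-r'=\pm 1$, the first relation yields $A^{\pm 1}=\alpha I_2$, hence $A=\pm I_2$; but a scalar matrix shares an eigenvector with $B$, so $A$ and $B$ are $ST$, a contradiction. If $s-s'=\pm 1$, the second relation yields $B=\pm I_2$ by the same argument, and again $A,B$ are $ST$, a contradiction.
\end{proof}
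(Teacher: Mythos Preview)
Your proof is correct and follows exactly the same route as the paper: invoke Proposition \ref{fond} to obtain $A^{r-r'}=\alpha I_2$ (resp.\ $B^{s-s'}=-\alpha\epsilon I_2$), observe that $r-r'=\pm1$ (resp.\ $s-s'=\pm1$) forces $A=\pm I_2$ (resp.\ $B=\pm I_2$), and note this contradicts the non-$ST$ hypothesis. The only difference is that you spell out why a scalar matrix makes the pair $ST$, whereas the paper simply writes ``that is impossible.''
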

  \begin{proof}
 Assume, for instance, that $r-r'=\pm{1}$. According to Proposition \ref{fond}, necessarily one has $A=\pm{I}_2$, that is impossible. 
  \end{proof}
  \begin{thm} \label{casgen}
  Assume that $r-r'\not=\pm{1}$ and $s-s'\not=\pm{1}$. A pair $(A,B)\in{S}L_2^{\;\;2}(\mathbb{C})$ is not $ST$ and satisfies Eq. (\ref{equation 1}) if and only if it is conjugate to a pair in the form $$(\begin{pmatrix}u&v\\0&u^{-1}\end{pmatrix},\begin{pmatrix}p&0\\q&p^{-1}\end{pmatrix})$$
   where $u,v,p,q\in\mathbb{C}^*$ are such that
    $$u^{r-r'}=\pm{1}\;,\;p^{s-s'}=-\epsilon{u}^{r-r'},$$ 
  \begin{equation}  \label{ineg} u^{2r}+p^{2s}\not=0,1+u^{2r}p^{2s}\not=0\;\;\text{and} \end{equation}
  \begin{equation} \label{relat} qv=\dfrac{-1-u^{2r}p^{2s}}{u^{r}\phi_{r}(u)p^{s}\phi_{s}(p)}\;.\end{equation} 
  \end{thm}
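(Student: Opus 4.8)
The plan is to funnel everything through Lemma~\ref{decomp} and Proposition~\ref{fond}. By Lemma~\ref{decomp}~i), a non-$ST$ pair $(A,B)\in SL_2^2(\mathbb{C})$ is, after a simultaneous conjugation, exactly a pair $(A_1,B_1)$ of the displayed shape with $u,v,p,q\in\mathbb{C}^*$, and such a pair is non-$ST$ precisely when the inequality (\ref{inequal}) holds. So the theorem reduces to the scalar equivalence: $(A_1,B_1)$ satisfies Eq.~(\ref{equation 1}) and (\ref{inequal}) holds $\iff$ the conditions $u^{r-r'}=\pm1$, $p^{s-s'}=-\epsilon u^{r-r'}$, (\ref{ineg}), (\ref{relat}) hold, where the first two equalities are understood together with $u^2\neq1$, $p^2\neq1$ (as they arise from Lemma~\ref{decomp}~iii)). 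Proposition~\ref{fond} says that $(A_1,B_1)$ satisfies Eq.~(\ref{equation 1}) iff there is $\alpha\in\{-1,1\}$ with $A_1^{r-r'}=\alpha I_2$, $B_1^{s-s'}=-\alpha\epsilon I_2$ and $(A_1^rB_1^s)^2=-I_2$, and I would translate the three equalities one at a time.

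First, Lemma~\ref{decomp}~iii) gives $A_1^{r-r'}=\alpha I_2\iff(u^{r-r'}=\alpha\text{ and }u^2\neq1)$ and $B_1^{s-s'}=-\alpha\epsilon I_2\iff(p^{s-s'}=-\alpha\epsilon\text{ and }p^2\neq1)$; eliminating the common sign $\alpha=u^{r-r'}$ yields the first line of the statement plus $u^2\neq1$, $p^2\neq1$. I would record here a small but essential consequence: since $u^{r-r'}=\pm1$ gives $u^{2(r-r')}=1$ and $\gcd\!\bigl(2r,\,2(r-r')\bigr)=2\gcd(r,r')=2$, the condition $u^2\neq1$ forces $u^{2r}\neq1$, hence $\phi_r(u)\neq0$; likewise $p^{2s}\neq1$ and $\phi_s(p)\neq0$. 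For the third equality, write $A_1^r$ and $B_1^s$ out via Lemma~\ref{decomp}~ii); both lie in $SL_2(\mathbb{C})$, and for $M\in SL_2(\mathbb{C})$ Cayley--Hamilton gives $M^2=(\operatorname{tr}M)M-I_2$, so $M^2=-I_2\iff\operatorname{tr}M=0$. Hence $(A_1^rB_1^s)^2=-I_2$ is equivalent to
\[
\operatorname{tr}(A_1^rB_1^s)=u^rp^s+u^{-r}p^{-s}+v\,\phi_r(u)\,q\,\phi_s(p)=0,
\]
which, as $\phi_r(u)\neq0\neq\phi_s(p)$, rearranges to exactly relation (\ref{relat}); and since its left-hand side $qv$ is nonzero, the numerator $-1-u^{2r}p^{2s}$ is nonzero, i.e.\ $1+u^{2r}p^{2s}\neq0$.

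The step I expect to be the main technical obstacle is matching the non-triangularisability inequality (\ref{inequal}) for $(A_1,B_1)$ with the remaining condition $u^{2r}+p^{2s}\neq0$ of (\ref{ineg}), since it requires all the nonvanishing conditions to be juggled simultaneously. I would do this via the identity, valid whenever $u^2,p^2,u^{2r},p^{2s}\neq1$ and (\ref{relat}) holds,
\[
(p^2-1)(u^2-1)+uvpq=\frac{-(1-u^2)(1-p^2)\,\bigl(u^{2r}+p^{2s}\bigr)}{(1-u^{2r})(1-p^{2s})},
\]
obtained by substituting $u^r\phi_r(u)=u(1-u^{2r})/(1-u^2)$ and $p^s\phi_s(p)=p(1-p^{2s})/(1-p^2)$ into (\ref{relat}) to evaluate $uvpq$, and then using $(1-u^{2r})(1-p^{2s})-1-u^{2r}p^{2s}=-(u^{2r}+p^{2s})$. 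Given the nonvanishings already secured, the left-hand side is nonzero iff $u^{2r}+p^{2s}\neq0$, so (\ref{inequal}) is equivalent to (\ref{ineg}). Chaining the three translations gives both directions of the theorem. As an alternative handling of the last two equalities, one may instead apply Lemma~\ref{elem} to $(A_1^r,B_1^s)$, which is already in the canonical form of Lemma~\ref{decomp} with parameters $(u^r,\,v\phi_r(u),\,p^s,\,q\phi_s(p))$; its condition (\ref{cond}) then unwinds to exactly (\ref{relat}) and (\ref{ineg}).
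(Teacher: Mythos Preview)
Your proof is correct and follows essentially the same route as the paper: reduce to the canonical form via Lemma~\ref{decomp}, then invoke Proposition~\ref{fond} and translate its three conditions into scalar equations in $u,v,p,q$. The paper handles $(A^rB^s)^2=-I_2$ by first arguing that $A^r,B^s$ are not $ST$ and then quoting Lemma~\ref{elem}, which is exactly the alternative you mention at the end; your primary route via Cayley--Hamilton and $\operatorname{tr}(A_1^rB_1^s)=0$ is equivalent and slightly more self-contained. Where you go beyond the paper is in the converse direction: the paper simply writes ``The converse is clear,'' whereas you explicitly verify that the listed scalar conditions force $\phi_r(u),\phi_s(p)\neq0$ and, via the displayed identity relating $(p^2-1)(u^2-1)+uvpq$ to $u^{2r}+p^{2s}$, that the non-$ST$ inequality~(\ref{inequal}) is equivalent to the remaining part of~(\ref{ineg}). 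This makes the biconditional genuinely two-sided rather than asserted.
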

  \begin{proof}
 According to Lemma \ref{decomp}, we may assume $A=\begin{pmatrix}u&v\\0&u^{-1}\end{pmatrix},B=\begin{pmatrix}p&0\\q&p^{-1}\end{pmatrix}$, where $uvpq\not=0$. For every $r\in\mathbb{Z}$, one has $A^{r}=\begin{pmatrix}u^{r}&v\phi_{r}(u)\\0&u^{-r}\end{pmatrix},B^{s}=\begin{pmatrix}p^{s}&0\\q\phi_{s}(p)&p^{-s}\end{pmatrix}$. According to Proposition \ref{fond}, $(A^{r}B^{s})^2=-I_2$. Suppose that $A^{r}$ and $B^{s}$ have a common eigenvector. Then, for instance,  $A^{r}=\pm{I}_2$, $A^{r'}=\pm{I}_2$ and finally $A=\pm{I}_2$. This is impossible. Thus $A^{r}$ and $B^{s}$ are not $ST$. By Lemma \ref{elem}, Conditions (\ref{ineg}) and (\ref{relat}) hold. By Proposition \ref{fond}, there exists $\alpha\in\{-1,1\}$ such that $A^{r-r'}=\alpha{I}_2,B^{s-s'}=-\alpha\epsilon{I}_2$. We deduce that $u^{r-r'}=\alpha$ and $p^{s-s'}=-\alpha\epsilon$.\\
 The converse is clear.   
  \end{proof}
  \begin{rem}
 Note that $u$ and $p$ are roots of unity except if $r=r'=\pm{1}$ or $s=s'=\pm{1}$.\\
   \end{rem}
   
    \textbf{Acknowledgements}. 
 The author thanks F. Luca for his participation in the proof of Proposition \ref{root} and D. Adam for many valuable discussions. The author thanks the referees for their valuable suggestions.

\bibliographystyle{plain}

\begin{thebibliography}{99}
\bibitem[1]{4} J. Benitez, N. Thome. \emph{$\{k\}$-group periodic matrices.} SIAM  J. Matrix Anal. Applic. \textbf{28.1} (2006) 9-25. 
\bibitem[2]{3} F. Gantmacher. \emph{The theory of matrices I.} Chelsea, New York, 1959.
\bibitem[3]{1} R. Horn, G. Piepmeyer. \emph{Two applications of the theory of primary matrix functions.} Linear Algebra and its Applications, \textbf{361} (2003) 99-106. 
\bibitem[4]{2} L. Lebtahi, O. Romero, N. Thome. \emph{Characterization of $\{K,s+1\}$-potent matrices and applications.} Linear Algebra and its Applications, \textbf{436} (2012) 293-306.
\bibitem[5]{8} L. Lebtahi, O. Romero, N. Thome. \emph{Relations between $\{K,s+1\}$-potent matrices and different classes of complex matrices.} To appear in Linear Algebra and its Applications, \textbf{DOI:10.1016/j.laa.2011.10.042.}  
\bibitem[6]{9} R. Loewy, V. Mehrmann. \emph{A note on Potter's theorem for quasi-commutative matrices.} Linear Algebra and its Applications, \textbf{430} (2009) 1812-1825. 

\bibitem[7]{6} W.F. Trench. \emph{Characterization and properties of matrices with $k$-involutory symmetries.} Linear Algebra and its Applications, \textbf{429} (2008) 2278-2290.
\bibitem[8]{7} W.F. Trench. \emph{Characterization and properties of matrices with $k$-involutory symmetries II.} Linear Algebra and its Applications, \textbf{432} (2010) 2782-2797.
\bibitem[9]{5} H. Turnbull, A. Aitken. \emph{An introduction to the theory of canonical matrices.} Dover, New York, 1961.

\end{thebibliography}

\end{document}